\documentclass{article}
\usepackage{graphicx} 
\usepackage{amsmath,amsfonts, amsthm,amssymb,oldgerm,amssymb,amsthm}
\usepackage{esint}
\usepackage{array}
\usepackage[utf8]{inputenc}
\usepackage[T1]{fontenc}
\usepackage{epsfig}
\usepackage{relsize}
\usepackage{mathptmx}
\usepackage{float}
\usepackage{babel}
\usepackage{authblk}
\usepackage[utf8]{inputenc}
\usepackage[T1]{fontenc}
\usepackage{caption}
\usepackage{comment}
\usepackage{tikz}
\usepackage[margin=1in]{geometry}

\newtheorem{theorem}{Theorem}

\newtheorem{remark}{Remark}

\newtheorem{definition}{Definition}

\title{The Poisson's Problem on Graphs}

\author{ Diego A. Castro G.\footnote{Universidad Tecnológica de Pereira (UTP), xandercastro@utp.edu.co}}
\date{}

\date{\today}

\begin{document}

\maketitle

\begin{abstract}
    \noindent In this paper we study the problem
    \[
      \begin{cases}
           -\Delta_d u=\mu_0 \text{ in } G \\
           u=0 \text{ on } \partial G
      \end{cases}
    \]
    where, $\Delta_d$ represent the discret Laplacian, and $\mu_0$ it is a measure defined in the vertex of the graph $G=(V,E)$. Here $V$ defined the vertex of the graph, $E$ its edges and $\partial G$ its boundary. We prove that this problem has an unique solution by using an adaption of the Perron's method for the graphs by using an idea known as Balayage.
\end{abstract}

\textbf{Keywords:} Discrete laplacian, maximum principle, Perron´s method, Balayage.

\section{Introduction}
Graph theory has played a significant role in both pure and applied mathematics, owing to the wide range of applications it has across various fields of science and engineering \cite{Gross}. A classical problem in graph theory, for example, is the so-called postman problem, in which a postal worker must deliver packages throughout a city. If we represent the delivery locations as vertices and the streets as edges, we can ask the question: What route should the postman take to visit each vertex exactly once and return to the post office? As we can see, a relatively simple scenario—and a common, practical issue—can be formulated as a mathematically rich problem within graph theory. Similarly, many problems from other disciplines can be modeled discretely using graphs \cite{Henning}.

Let us now turn to continuous problems. Suppose we wish to implement a solution method for a certain type of equation computationally. It is well known that a computer cannot handle continuous data directly; the data must first be discretized. As a simple example, consider solving Poisson’s equation over a rectangular domain. While the mathematical formulation might be clear, how can it be implemented numerically? One must first discretize the problem, perhaps using finite element methods or the discrete Fourier transform. This leads us to the question: in terms of computational resources, which method is more efficient? Once again, we find ourselves dealing with problems that can be framed in terms of graph theory.

Graph theory has proven to be an essential tool in modeling and analyzing systems governed by probabilistic or electrical laws. In probability theory, graphs are often used to represent stochastic processes. A prime example is the Markov chain, where the states of a system are represented as vertices, and transitions with associated probabilities are encoded as directed edges. This graphical representation allows for the use of combinatorial and spectral techniques to analyze long-term behavior, convergence, and stationary distributions \cite{KemenySnell, Norris}.

In electrical engineering and mathematical physics, graphs play a fundamental role in the analysis of electrical networks. Here, vertices represent connection points (nodes), and edges correspond to resistors, capacitors, or other components with assigned weights such as conductance or resistance. Kirchhoff’s laws, which govern the flow of current and voltage in electrical circuits, can be naturally expressed using the Laplacian matrix of a graph \cite{DoyleSnell, Bollobas}. This connection enables elegant solutions to classical problems like computing effective resistance between nodes, and it also supports applications in network theory, signal processing, and even machine learning.

Moreover, the interplay between probability and electrical networks becomes particularly evident in the study of random walks on graphs. There exists a deep correspondence between the behavior of a random walker and current flow in an analogous electrical network, a duality that has been leveraged in numerous theoretical and applied contexts \cite{DoyleSnell}.

\section{Preliminaries}
In this section we present the definitions and theorems necessary to prove our main result about the existence and uniqueness for the solution to the problem

\begin{equation}
    \begin{cases}
           -\Delta_d u=\mu_0 \text{ in } G \\
           u=0 \text{ on } \partial G
      \end{cases}\label{principal_problem}
\end{equation}

\begin{definition}
   A graph $G$ is a pair $(V,E)$ where $V$ a  nonempty set of point, and $E$ a multiset of the 2-elements of set points of $V$. The elements of the set $V$ are  called vertex and the elements of the set $E$ are called edges. 
\end{definition}
A graph can be represented by using lines ''edges'' connecting the vertex.
\\

\begin{center}

\tikzset{every picture/.style={line width=0.75pt}} 

\begin{tikzpicture}[x=0.75pt,y=0.75pt,yscale=-1,xscale=1]

\draw  [fill={rgb, 255:red, 208; green, 2; blue, 27 }  ,fill opacity=1 ] (100,121) .. controls (100,117.69) and (102.69,115) .. (106,115) .. controls (109.31,115) and (112,117.69) .. (112,121) .. controls (112,124.31) and (109.31,127) .. (106,127) .. controls (102.69,127) and (100,124.31) .. (100,121) -- cycle ;
\draw  [fill={rgb, 255:red, 208; green, 2; blue, 27 }  ,fill opacity=1 ] (221,236) .. controls (221,232.69) and (223.69,230) .. (227,230) .. controls (230.31,230) and (233,232.69) .. (233,236) .. controls (233,239.31) and (230.31,242) .. (227,242) .. controls (223.69,242) and (221,239.31) .. (221,236) -- cycle ;
\draw  [fill={rgb, 255:red, 208; green, 2; blue, 27 }  ,fill opacity=1 ] (222.12,125.64) .. controls (224.53,127.91) and (224.64,131.71) .. (222.36,134.12) .. controls (220.09,136.53) and (216.29,136.64) .. (213.88,134.36) .. controls (211.47,132.09) and (211.36,128.29) .. (213.64,125.88) .. controls (215.91,123.47) and (219.71,123.36) .. (222.12,125.64) -- cycle ;
\draw  [fill={rgb, 255:red, 208; green, 2; blue, 27 }  ,fill opacity=1 ] (155.12,203.64) .. controls (157.53,205.91) and (157.64,209.71) .. (155.36,212.12) .. controls (153.09,214.53) and (149.29,214.64) .. (146.88,212.36) .. controls (144.47,210.09) and (144.36,206.29) .. (146.64,203.88) .. controls (148.91,201.47) and (152.71,201.36) .. (155.12,203.64) -- cycle ;
\draw  [color={rgb, 255:red, 0; green, 0; blue, 0 }  ,draw opacity=1 ][fill={rgb, 255:red, 208; green, 2; blue, 27 }  ,fill opacity=1 ] (299.12,107.64) .. controls (301.53,109.91) and (301.64,113.71) .. (299.36,116.12) .. controls (297.09,118.53) and (293.29,118.64) .. (290.88,116.36) .. controls (288.47,114.09) and (288.36,110.29) .. (290.64,107.88) .. controls (292.91,105.47) and (296.71,105.36) .. (299.12,107.64) -- cycle ;
\draw  [fill={rgb, 255:red, 208; green, 2; blue, 27 }  ,fill opacity=1 ] (222.12,42.64) .. controls (224.53,44.91) and (224.64,48.71) .. (222.36,51.12) .. controls (220.09,53.53) and (216.29,53.64) .. (213.88,51.36) .. controls (211.47,49.09) and (211.36,45.29) .. (213.64,42.88) .. controls (215.91,40.47) and (219.71,40.36) .. (222.12,42.64) -- cycle ;
\draw  [fill={rgb, 255:red, 208; green, 2; blue, 27 }  ,fill opacity=1 ] (316.01,210.54) .. controls (318.48,212.87) and (322.37,212.76) .. (324.7,210.29) .. controls (327.03,207.82) and (326.92,203.93) .. (324.45,201.6) .. controls (321.99,199.27) and (318.09,199.38) .. (315.76,201.85) .. controls (313.43,204.32) and (313.54,208.21) .. (316.01,210.54) -- cycle ;
\draw    (110,126.75) -- (146.64,203.88) ;
\draw    (112.5,118.25) -- (212.5,128.25) ;
\draw    (217,53.75) -- (220,124.25) ;
\draw    (219.5,136.75) -- (227,230) ;
\draw    (290.88,116.36) -- (223.5,129.75) ;
\draw    (315.76,201.85) -- (224.5,132.75) ;

\draw (93.5,100.4) node [anchor=north west][inner sep=0.75pt]  [font=\scriptsize]  {$v_{1}$};
\draw (221,27.9) node [anchor=north west][inner sep=0.75pt]  [font=\scriptsize]  {$v_{2}$};
\draw (146.5,214.4) node [anchor=north west][inner sep=0.75pt]  [font=\scriptsize]  {$v_{3}$};
\draw (204.5,109.9) node [anchor=north west][inner sep=0.75pt]  [font=\scriptsize]  {$v_{4}$};
\draw (220.5,239.9) node [anchor=north west][inner sep=0.75pt]  [font=\scriptsize]  {$v_{5}$};
\draw (302.5,101.4) node [anchor=north west][inner sep=0.75pt]  [font=\scriptsize]  {$v_{6}$};
\draw (328.5,201.4) node [anchor=north west][inner sep=0.75pt]  [font=\scriptsize]  {$v_{7}$};
\draw (152.5,107.4) node [anchor=north west][inner sep=0.75pt]  [font=\scriptsize]  {$e_{1}$};
\draw (128,152.4) node [anchor=north west][inner sep=0.75pt]  [font=\scriptsize]  {$e_{2}$};
\draw (220,85.9) node [anchor=north west][inner sep=0.75pt]  [font=\scriptsize]  {$e_{3}$};
\draw (226,172.4) node [anchor=north west][inner sep=0.75pt]  [font=\scriptsize]  {$e_{4}$};
\draw (274.5,156.9) node [anchor=north west][inner sep=0.75pt]  [font=\scriptsize]  {$e_{5}$};
\draw (254,108.4) node [anchor=north west][inner sep=0.75pt]  [font=\scriptsize]  {$e_{6}$};

\end{tikzpicture}

\end{center}

We consider finite graphs, in the sense that the set $V$ is finite, Moreover, there will be no double connections between vertices or loops. Two vertex $v, w$ is said adjacents if there exist an edge $e\in E$ such that $v$ and $w$ are connected by $e$, in this case we write $v\sim u$.

\begin{definition}(Simple graph)
    A simple graph is a graph that contains at most one edge between any two vertices and no loops on any vertex itself.
\end{definition}

\begin{definition}
   Let \( u, v \) be two vertices in \( G \). A path \( P_{uv} \) from \( u \) to \( v \) in \( V \) is a set of edges in \( E \) such that
\[
P_{uv} = \{ e_i \in E : u=v_1\sim v_2\sim ...\sim v_n=v \}
\]
We say that the graph is connected if, for every pair of vertices \( u, v \), there exists a path connecting \( u \) to \( v \) in \( G \).
\end{definition}

\tikzset{every picture/.style={line width=0.75pt}} 

\begin{tikzpicture}[x=0.75pt,y=0.75pt,yscale=-1,xscale=1]

\draw  [fill={rgb, 255:red, 208; green, 2; blue, 27 }  ,fill opacity=1 ] (100,121) .. controls (100,117.69) and (102.69,115) .. (106,115) .. controls (109.31,115) and (112,117.69) .. (112,121) .. controls (112,124.31) and (109.31,127) .. (106,127) .. controls (102.69,127) and (100,124.31) .. (100,121) -- cycle ;
\draw  [fill={rgb, 255:red, 208; green, 2; blue, 27 }  ,fill opacity=1 ] (221,236) .. controls (221,232.69) and (223.69,230) .. (227,230) .. controls (230.31,230) and (233,232.69) .. (233,236) .. controls (233,239.31) and (230.31,242) .. (227,242) .. controls (223.69,242) and (221,239.31) .. (221,236) -- cycle ;
\draw  [fill={rgb, 255:red, 208; green, 2; blue, 27 }  ,fill opacity=1 ] (222.12,125.64) .. controls (224.53,127.91) and (224.64,131.71) .. (222.36,134.12) .. controls (220.09,136.53) and (216.29,136.64) .. (213.88,134.36) .. controls (211.47,132.09) and (211.36,128.29) .. (213.64,125.88) .. controls (215.91,123.47) and (219.71,123.36) .. (222.12,125.64) -- cycle ;
\draw  [fill={rgb, 255:red, 208; green, 2; blue, 27 }  ,fill opacity=1 ] (155.12,203.64) .. controls (157.53,205.91) and (157.64,209.71) .. (155.36,212.12) .. controls (153.09,214.53) and (149.29,214.64) .. (146.88,212.36) .. controls (144.47,210.09) and (144.36,206.29) .. (146.64,203.88) .. controls (148.91,201.47) and (152.71,201.36) .. (155.12,203.64) -- cycle ;
\draw  [color={rgb, 255:red, 0; green, 0; blue, 0 }  ,draw opacity=1 ][fill={rgb, 255:red, 208; green, 2; blue, 27 }  ,fill opacity=1 ] (299.12,107.64) .. controls (301.53,109.91) and (301.64,113.71) .. (299.36,116.12) .. controls (297.09,118.53) and (293.29,118.64) .. (290.88,116.36) .. controls (288.47,114.09) and (288.36,110.29) .. (290.64,107.88) .. controls (292.91,105.47) and (296.71,105.36) .. (299.12,107.64) -- cycle ;
\draw  [fill={rgb, 255:red, 208; green, 2; blue, 27 }  ,fill opacity=1 ] (222.12,42.64) .. controls (224.53,44.91) and (224.64,48.71) .. (222.36,51.12) .. controls (220.09,53.53) and (216.29,53.64) .. (213.88,51.36) .. controls (211.47,49.09) and (211.36,45.29) .. (213.64,42.88) .. controls (215.91,40.47) and (219.71,40.36) .. (222.12,42.64) -- cycle ;
\draw  [fill={rgb, 255:red, 208; green, 2; blue, 27 }  ,fill opacity=1 ] (316.01,210.54) .. controls (318.48,212.87) and (322.37,212.76) .. (324.7,210.29) .. controls (327.03,207.82) and (326.92,203.93) .. (324.45,201.6) .. controls (321.99,199.27) and (318.09,199.38) .. (315.76,201.85) .. controls (313.43,204.32) and (313.54,208.21) .. (316.01,210.54) -- cycle ;
\draw    (110,126.75) -- (146.64,203.88) ;
\draw    (112.5,118.25) -- (212.5,128.25) ;
\draw    (217,53.75) -- (220,124.25) ;
\draw    (219.5,136.75) -- (227,230) ;
\draw    (290.88,116.36) -- (223.5,129.75) ;
\draw    (313.63,203.22) -- (222.36,134.12) ;
\draw  [fill={rgb, 255:red, 208; green, 2; blue, 27 }  ,fill opacity=1 ] (418,106.83) .. controls (418,103.52) and (420.69,100.83) .. (424,100.83) .. controls (427.31,100.83) and (430,103.52) .. (430,106.83) .. controls (430,110.15) and (427.31,112.83) .. (424,112.83) .. controls (420.69,112.83) and (418,110.15) .. (418,106.83) -- cycle ;
\draw  [fill={rgb, 255:red, 208; green, 2; blue, 27 }  ,fill opacity=1 ] (539,221.83) .. controls (539,218.52) and (541.69,215.83) .. (545,215.83) .. controls (548.31,215.83) and (551,218.52) .. (551,221.83) .. controls (551,225.15) and (548.31,227.83) .. (545,227.83) .. controls (541.69,227.83) and (539,225.15) .. (539,221.83) -- cycle ;
\draw  [fill={rgb, 255:red, 208; green, 2; blue, 27 }  ,fill opacity=1 ] (540.12,111.47) .. controls (542.53,113.75) and (542.64,117.54) .. (540.36,119.95) .. controls (538.09,122.36) and (534.29,122.47) .. (531.88,120.2) .. controls (529.47,117.92) and (529.36,114.12) .. (531.64,111.71) .. controls (533.91,109.3) and (537.71,109.2) .. (540.12,111.47) -- cycle ;
\draw  [fill={rgb, 255:red, 208; green, 2; blue, 27 }  ,fill opacity=1 ] (473.12,189.47) .. controls (475.53,191.75) and (475.64,195.54) .. (473.36,197.95) .. controls (471.09,200.36) and (467.29,200.47) .. (464.88,198.2) .. controls (462.47,195.92) and (462.36,192.12) .. (464.64,189.71) .. controls (466.91,187.3) and (470.71,187.2) .. (473.12,189.47) -- cycle ;
\draw  [color={rgb, 255:red, 0; green, 0; blue, 0 }  ,draw opacity=1 ][fill={rgb, 255:red, 208; green, 2; blue, 27 }  ,fill opacity=1 ] (617.12,93.47) .. controls (619.53,95.75) and (619.64,99.54) .. (617.36,101.95) .. controls (615.09,104.36) and (611.29,104.47) .. (608.88,102.2) .. controls (606.47,99.92) and (606.36,96.12) .. (608.64,93.71) .. controls (610.91,91.3) and (614.71,91.2) .. (617.12,93.47) -- cycle ;
\draw  [fill={rgb, 255:red, 208; green, 2; blue, 27 }  ,fill opacity=1 ] (540.12,28.47) .. controls (542.53,30.75) and (542.64,34.54) .. (540.36,36.95) .. controls (538.09,39.36) and (534.29,39.47) .. (531.88,37.2) .. controls (529.47,34.92) and (529.36,31.12) .. (531.64,28.71) .. controls (533.91,26.3) and (537.71,26.2) .. (540.12,28.47) -- cycle ;
\draw  [fill={rgb, 255:red, 208; green, 2; blue, 27 }  ,fill opacity=1 ] (634.01,196.37) .. controls (636.48,198.7) and (640.37,198.59) .. (642.7,196.12) .. controls (645.03,193.65) and (644.92,189.76) .. (642.45,187.43) .. controls (639.99,185.1) and (636.09,185.21) .. (633.76,187.68) .. controls (631.43,190.15) and (631.54,194.04) .. (634.01,196.37) -- cycle ;
\draw    (428,112.58) -- (464.64,189.71) ;
\draw    (535,39.58) -- (538,110.08) ;
\draw    (537.5,122.58) -- (545,215.83) ;
\draw    (608.88,102.2) -- (541.5,115.58) ;
\draw    (633.76,187.68) -- (542.5,118.58) ;

\draw (93.5,100.4) node [anchor=north west][inner sep=0.75pt]  [font=\scriptsize]  {$v_{1}$};
\draw (221,27.9) node [anchor=north west][inner sep=0.75pt]  [font=\scriptsize]  {$v_{2}$};
\draw (146.5,214.4) node [anchor=north west][inner sep=0.75pt]  [font=\scriptsize]  {$v_{3}$};
\draw (204.5,109.9) node [anchor=north west][inner sep=0.75pt]  [font=\scriptsize]  {$v_{4}$};
\draw (220.5,239.9) node [anchor=north west][inner sep=0.75pt]  [font=\scriptsize]  {$v_{5}$};
\draw (302.5,101.4) node [anchor=north west][inner sep=0.75pt]  [font=\scriptsize]  {$v_{6}$};
\draw (328.5,201.4) node [anchor=north west][inner sep=0.75pt]  [font=\scriptsize]  {$v_{7}$};
\draw (152.5,107.4) node [anchor=north west][inner sep=0.75pt]  [font=\scriptsize]  {$e_{1}$};
\draw (128,152.4) node [anchor=north west][inner sep=0.75pt]  [font=\scriptsize]  {$e_{2}$};
\draw (220,85.9) node [anchor=north west][inner sep=0.75pt]  [font=\scriptsize]  {$e_{3}$};
\draw (226,172.4) node [anchor=north west][inner sep=0.75pt]  [font=\scriptsize]  {$e_{4}$};
\draw (274.5,156.9) node [anchor=north west][inner sep=0.75pt]  [font=\scriptsize]  {$e_{5}$};
\draw (254,108.4) node [anchor=north west][inner sep=0.75pt]  [font=\scriptsize]  {$e_{6}$};
\draw (411.5,86.23) node [anchor=north west][inner sep=0.75pt]  [font=\scriptsize]  {$v_{1}$};
\draw (539,13.73) node [anchor=north west][inner sep=0.75pt]  [font=\scriptsize]  {$v_{2}$};
\draw (464.5,200.23) node [anchor=north west][inner sep=0.75pt]  [font=\scriptsize]  {$v_{3}$};
\draw (522.5,95.73) node [anchor=north west][inner sep=0.75pt]  [font=\scriptsize]  {$v_{4}$};
\draw (538.5,225.73) node [anchor=north west][inner sep=0.75pt]  [font=\scriptsize]  {$v_{5}$};
\draw (620.5,87.23) node [anchor=north west][inner sep=0.75pt]  [font=\scriptsize]  {$v_{6}$};
\draw (646.5,187.23) node [anchor=north west][inner sep=0.75pt]  [font=\scriptsize]  {$v_{7}$};
\draw (446,138.23) node [anchor=north west][inner sep=0.75pt]  [font=\scriptsize]  {$e_{2}$};
\draw (538,71.73) node [anchor=north west][inner sep=0.75pt]  [font=\scriptsize]  {$e_{3}$};
\draw (544,158.23) node [anchor=north west][inner sep=0.75pt]  [font=\scriptsize]  {$e_{4}$};
\draw (592.5,142.73) node [anchor=north west][inner sep=0.75pt]  [font=\scriptsize]  {$e_{5}$};
\draw (572,94.23) node [anchor=north west][inner sep=0.75pt]  [font=\scriptsize]  {$e_{6}$};
\draw (178.67,256.67) node [anchor=north west][inner sep=0.75pt]  [font=\footnotesize] [align=left] {Connected Graph};
\draw (499.33,252) node [anchor=north west][inner sep=0.75pt]  [font=\footnotesize] [align=left] {Disconnected Graph};

\end{tikzpicture}
\begin{definition}
   We say that the graph \( G \) is directed if for all \( u, v \in G \) with \( u \sim v \), there exists \( e \in E \) such that \( u \) is the initial point and \( v \) is the terminal point of \( e \). We use the notation \( e=e_{uv} \) to denote the edge that has initial point at \( u \) and terminal point at \( v \), or $e_-$ for the initial point and $e_+$ for the terminal point.

\begin{center}
    \tikzset{every picture/.style={line width=0.75pt}} 

\begin{tikzpicture}[x=0.75pt,y=0.75pt,yscale=-1,xscale=1]

\draw  [fill={rgb, 255:red, 208; green, 2; blue, 27 }  ,fill opacity=1 ] (280,375.83) .. controls (280,372.52) and (282.69,369.83) .. (286,369.83) .. controls (289.31,369.83) and (292,372.52) .. (292,375.83) .. controls (292,379.15) and (289.31,381.83) .. (286,381.83) .. controls (282.69,381.83) and (280,379.15) .. (280,375.83) -- cycle ;
\draw  [fill={rgb, 255:red, 208; green, 2; blue, 27 }  ,fill opacity=1 ] (335.12,458.47) .. controls (337.53,460.75) and (337.64,464.54) .. (335.36,466.95) .. controls (333.09,469.36) and (329.29,469.47) .. (326.88,467.2) .. controls (324.47,464.92) and (324.36,461.12) .. (326.64,458.71) .. controls (328.91,456.3) and (332.71,456.2) .. (335.12,458.47) -- cycle ;
\draw    (290,381.58) -- (326.64,458.71) ;
\draw   (310.01,432.62) -- (308.12,420.41) -- (318.15,427.63) ;

\draw (273.5,355.23) node [anchor=north west][inner sep=0.75pt]  [font=\scriptsize]  {$e_{+}$};
\draw (328.88,470.6) node [anchor=north west][inner sep=0.75pt]  [font=\scriptsize]  {$e_{-}$};
\draw (308,407.23) node [anchor=north west][inner sep=0.75pt]  [font=\scriptsize]  {$e$};

\end{tikzpicture}
\end{center}

\end{definition}

\noindent For a subset $D\subseteq V\setminus \{\infty\}$,  we define the boundary of D by $\partial D=\partial _+D \cup \partial _-D$, where
\[
\partial_{\pm}D=\{v\in V\setminus D: \text{ There exists }e\in E \text{ such that }e_{\pm}=v \text{ and } e_{\mp}\in D\}
\]
and $\nu:\partial D\rightarrow \mathbb{R}$ such that $\nu_{D}(e)=\pm 1$, if $e\in \partial _{\pm}D$ analogue to the normal vector.\\

\begin{center}
    \tikzset{every picture/.style={line width=0.75pt}} 

\begin{tikzpicture}[x=0.75pt,y=0.75pt,yscale=-1,xscale=1]

\draw  [fill={rgb, 255:red, 208; green, 2; blue, 27 }  ,fill opacity=1 ] (100,121) .. controls (100,117.69) and (102.69,115) .. (106,115) .. controls (109.31,115) and (112,117.69) .. (112,121) .. controls (112,124.31) and (109.31,127) .. (106,127) .. controls (102.69,127) and (100,124.31) .. (100,121) -- cycle ;
\draw  [fill={rgb, 255:red, 208; green, 2; blue, 27 }  ,fill opacity=1 ] (221,236) .. controls (221,232.69) and (223.69,230) .. (227,230) .. controls (230.31,230) and (233,232.69) .. (233,236) .. controls (233,239.31) and (230.31,242) .. (227,242) .. controls (223.69,242) and (221,239.31) .. (221,236) -- cycle ;
\draw  [fill={rgb, 255:red, 208; green, 2; blue, 27 }  ,fill opacity=1 ] (222.12,125.64) .. controls (224.53,127.91) and (224.64,131.71) .. (222.36,134.12) .. controls (220.09,136.53) and (216.29,136.64) .. (213.88,134.36) .. controls (211.47,132.09) and (211.36,128.29) .. (213.64,125.88) .. controls (215.91,123.47) and (219.71,123.36) .. (222.12,125.64) -- cycle ;
\draw  [fill={rgb, 255:red, 65; green, 117; blue, 5 }  ,fill opacity=1 ] (155.12,203.64) .. controls (157.53,205.91) and (157.64,209.71) .. (155.36,212.12) .. controls (153.09,214.53) and (149.29,214.64) .. (146.88,212.36) .. controls (144.47,210.09) and (144.36,206.29) .. (146.64,203.88) .. controls (148.91,201.47) and (152.71,201.36) .. (155.12,203.64) -- cycle ;
\draw  [color={rgb, 255:red, 0; green, 0; blue, 0 }  ,draw opacity=1 ][fill={rgb, 255:red, 65; green, 117; blue, 5 }  ,fill opacity=1 ] (299.12,107.64) .. controls (301.53,109.91) and (301.64,113.71) .. (299.36,116.12) .. controls (297.09,118.53) and (293.29,118.64) .. (290.88,116.36) .. controls (288.47,114.09) and (288.36,110.29) .. (290.64,107.88) .. controls (292.91,105.47) and (296.71,105.36) .. (299.12,107.64) -- cycle ;
\draw  [fill={rgb, 255:red, 65; green, 117; blue, 5 }  ,fill opacity=1 ] (222.12,42.64) .. controls (224.53,44.91) and (224.64,48.71) .. (222.36,51.12) .. controls (220.09,53.53) and (216.29,53.64) .. (213.88,51.36) .. controls (211.47,49.09) and (211.36,45.29) .. (213.64,42.88) .. controls (215.91,40.47) and (219.71,40.36) .. (222.12,42.64) -- cycle ;
\draw  [fill={rgb, 255:red, 208; green, 2; blue, 27 }  ,fill opacity=1 ] (316.01,210.54) .. controls (318.48,212.87) and (322.37,212.76) .. (324.7,210.29) .. controls (327.03,207.82) and (326.92,203.93) .. (324.45,201.6) .. controls (321.99,199.27) and (318.09,199.38) .. (315.76,201.85) .. controls (313.43,204.32) and (313.54,208.21) .. (316.01,210.54) -- cycle ;
\draw    (110,126.75) -- (146.64,203.88) ;
\draw    (112.5,118.25) -- (212.5,128.25) ;
\draw    (217,53.75) -- (220,124.25) ;
\draw    (219.5,136.75) -- (227,230) ;
\draw    (290.88,116.36) -- (223.5,129.75) ;
\draw    (313.63,203.22) -- (222.36,134.12) ;
\draw  [fill={rgb, 255:red, 65; green, 117; blue, 5 }  ,fill opacity=1 ] (54,31.83) .. controls (54,28.52) and (56.69,25.83) .. (60,25.83) .. controls (63.31,25.83) and (66,28.52) .. (66,31.83) .. controls (66,35.15) and (63.31,37.83) .. (60,37.83) .. controls (56.69,37.83) and (54,35.15) .. (54,31.83) -- cycle ;
\draw  [fill={rgb, 255:red, 208; green, 2; blue, 27 }  ,fill opacity=1 ] (109.12,114.47) .. controls (111.53,116.75) and (111.64,120.54) .. (109.36,122.95) .. controls (107.09,125.36) and (103.29,125.47) .. (100.88,123.2) .. controls (98.47,120.92) and (98.36,117.12) .. (100.64,114.71) .. controls (102.91,112.3) and (106.71,112.2) .. (109.12,114.47) -- cycle ;
\draw    (64,37.58) -- (100.64,114.71) ;
\draw  [fill={rgb, 255:red, 65; green, 117; blue, 5 }  ,fill opacity=1 ] (313.12,254.64) .. controls (315.53,256.91) and (315.64,260.71) .. (313.36,263.12) .. controls (311.09,265.53) and (307.29,265.64) .. (304.88,263.36) .. controls (302.47,261.09) and (302.36,257.29) .. (304.64,254.88) .. controls (306.91,252.47) and (310.71,252.36) .. (313.12,254.64) -- cycle ;
\draw    (233,236) -- (304.64,254.88) ;
\draw    (320.23,206.07) -- (309,259) ;

\draw (221,27.9) node [anchor=north west][inner sep=0.75pt]  [font=\scriptsize]  {$v_{2}$};
\draw (146.5,214.4) node [anchor=north west][inner sep=0.75pt]  [font=\scriptsize]  {$v_{3}$};
\draw (204.5,109.9) node [anchor=north west][inner sep=0.75pt]  [font=\scriptsize]  {$v_{4}$};
\draw (220.5,239.9) node [anchor=north west][inner sep=0.75pt]  [font=\scriptsize]  {$v_{5}$};
\draw (302.5,101.4) node [anchor=north west][inner sep=0.75pt]  [font=\scriptsize]  {$v_{6}$};
\draw (328.5,201.4) node [anchor=north west][inner sep=0.75pt]  [font=\scriptsize]  {$v_{7}$};
\draw (152.5,107.4) node [anchor=north west][inner sep=0.75pt]  [font=\scriptsize]  {$e_{1}$};
\draw (128,152.4) node [anchor=north west][inner sep=0.75pt]  [font=\scriptsize]  {$e_{2}$};
\draw (220,85.9) node [anchor=north west][inner sep=0.75pt]  [font=\scriptsize]  {$e_{3}$};
\draw (226,172.4) node [anchor=north west][inner sep=0.75pt]  [font=\scriptsize]  {$e_{4}$};
\draw (274.5,156.9) node [anchor=north west][inner sep=0.75pt]  [font=\scriptsize]  {$e_{5}$};
\draw (254,108.4) node [anchor=north west][inner sep=0.75pt]  [font=\scriptsize]  {$e_{6}$};
\draw (47.5,11.23) node [anchor=north west][inner sep=0.75pt]  [font=\scriptsize]  {$v_{1}$};
\draw (100.5,125.23) node [anchor=north west][inner sep=0.75pt]  [font=\scriptsize]  {$v_{3}$};
\draw (82,63.23) node [anchor=north west][inner sep=0.75pt]  [font=\scriptsize]  {$e_{2}$};

\end{tikzpicture}

\vspace{0.3em}
\textit{Green points represented the boundary of the set D (red points). }

\end{center}

\noindent We will introduce the basic definitions about subharmonic, superharmonic and harmonic functions and their basic properties in networks.

\begin{definition}
	Given a scalar field $U\colon V\to \mathbb{R}$, we define the gradient field by
	\[
	  DU(e)=U(e_+)-U(e_-)
	\]
\end{definition}

\begin{definition}
	Given a measure $\mu$ in $D\subseteq V$, we define the measure of $D$ with respect to $\mu$ by
	\[
	  \mu(D)=\sum \limits_{v\in D}\mu(v)
	\]
	and the integral of $U$ over $G$ with respect to the measure $\mu$ by
	\[
	  \int \limits_G Ud\mu=\sum \limits_{v\in V}U(v)\mu(v).
	\]
    
\end{definition}

\noindent Remember that for a vector field $i:\mathbb{R}^n\rightarrow \mathbb{R}^n$
\begin{align*}
\nabla\cdot i(x)&=\lim\limits_{\epsilon \rightarrow 0}\frac{1}{|B_{\epsilon}(x)|}\int \limits_{B_{\epsilon}(x)}\nabla\cdot i(y)dy\\
&=\lim\limits_{\epsilon \rightarrow 0}\frac{n}{|\partial B_{\epsilon}(x)|}\int \limits_{\partial B_{\epsilon}(x)}i(y)\cdot \nu(y)dS(y),
\end{align*}
where $\nu(y)=(y-x)/\epsilon$ is the exterior normal vector at $y\in \partial B_{\epsilon}(x)$. Thus, for $i\colon E\to \mathbb{R}$ we define
\[
\nabla\cdot i(v)=\frac{1}{|\{e\in E: v\in \{e_+,e_-\}|}\bigg(\sum \limits_{e\in E|e_-=v}i(e)-\sum \limits_{e\in E|e_+=v}i(e)\bigg).
\]

\noindent Under these constructions
\[
\nabla\cdot i(v)=\frac{1}{\# \partial\{v\}}\sum \limits_{e\in \partial \{v\}}i(e)\nu_v(e).
\]

\noindent Note that the set $\{e\in E: v\in \{e_+,e_-\}$ represented the vertices that are adjacent to vertex $v$, therefore $\# \partial\{v\}$ represent the degree of $v$.

\subsection{Divergence's Theorem}
 Remember that for a field $i: \mathbb{R}^n\rightarrow \mathbb{R}^n$ and $\Omega\subseteq \mathbb{R}^n$, if $i\in C^1(\Omega)$ and  $\Omega$ is simply connected with $\partial \Omega$ regular, then
\[
  \int \limits_{\Omega}\nabla\cdot i=\int \limits_{\partial \Omega}i\cdot \nu dS.
\]
It is known as the Divergence Theorem. We will prove an analogue of this theorem defining 
\[
\int\limits_{D}U(v)dv=\sum_{v\in D}U(v)\#\partial \{ v \}.
\]
Here the measure $dv $ meaning the degree of vertices and 

\[
\int \limits_{\partial D}i\cdot \nu dS=\sum_{ \{ e\in \partial D \} }i(e)\nu(e)
\]

\begin{theorem}[Divergence's Theorem on graphs] 
	Let $i: E\rightarrow \mathbb{R}$ and $D$, $\partial D$ as above, then
	\[
	   \int \limits_{D}\nabla\cdot i=\int \limits_{\partial D}i\cdot \nu dS
	\]
\end{theorem}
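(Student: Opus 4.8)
The plan is to unfold both sides into finite sums over the vertices and edges of $G$ and then observe that the left-hand side collapses to the right-hand side by a discrete telescoping: every edge with both endpoints in $D$ enters the sum with opposite signs at its two endpoints and cancels, leaving exactly the edges that cross $\partial D$.

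First I would substitute the definition of the discrete divergence into $\int_D\nabla\cdot i=\sum_{v\in D}\nabla\cdot i(v)\,\#\partial\{v\}$. Writing the \emph{unnormalized} divergence as $\operatorname{div} i(v):=\sum_{e\in E:\,e_-=v} i(e)-\sum_{e\in E:\,e_+=v} i(e)$ (which is just $0$ if $v$ is isolated), the identity $\nabla\cdot i(v)\,\#\partial\{v\}=\operatorname{div} i(v)$ shows that the weight $\#\partial\{v\}$ built into $\int_D\,\cdot$ cancels the normalizing denominator, so that
\[
\int_D\nabla\cdot i=\sum_{v\in D}\Bigl(\sum_{e\in E:\,e_-=v} i(e)-\sum_{e\in E:\,e_+=v} i(e)\Bigr).
\]

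Next, since $V$ and $E$ are finite, I would interchange the order of summation and read off the coefficient of each $i(e)$. The term $i(e)$ appears once with coefficient $+1$ when $e_-\in D$ (from $v=e_-$) and once with coefficient $-1$ when $e_+\in D$ (from $v=e_+$); since $G$ has no loops, $e_-\neq e_+$, so these contributions come from distinct vertices and the total coefficient of $i(e)$ is $\mathbf 1_D(e_-)-\mathbf 1_D(e_+)$. This vanishes when both endpoints of $e$ lie in $D$ and when neither does; it equals $+1$ when $e_-\in D$, $e_+\notin D$, that is when $e$ exits $D$ (equivalently $e_+\in\partial_+D$); and it equals $-1$ when $e_+\in D$, $e_-\notin D$, that is when $e$ enters $D$ (equivalently $e_-\in\partial_-D$). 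In each of the two surviving cases $\mathbf 1_D(e_-)-\mathbf 1_D(e_+)$ coincides with $\nu_D(e)$.

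Hence only the edges meeting $\partial D=\partial_+D\cup\partial_-D$ survive, each with weight $\nu_D(e)$, giving
\[
\int_D\nabla\cdot i=\sum_{e\in\partial D} i(e)\,\nu_D(e)=\int_{\partial D} i\cdot\nu\,dS,
\]
which is the assertion. I do not expect a real obstacle: this is a discrete cancellation identity, and finiteness of the graph makes the rearrangement of the sums automatic. The only point that needs attention is orientation bookkeeping — confirming that the sign produced by the cancellation matches the convention $\nu_D=\pm1$ on $\partial_\pm D$, and that, because $G$ is simple and loopless, each boundary edge is counted exactly once and its outer endpoint lies in exactly one of $\partial_+D$, $\partial_-D$.
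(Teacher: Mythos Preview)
Your proof is correct and follows essentially the same approach as the paper: expand the left-hand side as a double sum over $v\in D$ and incident edges, observe that each edge with both endpoints in $D$ contributes with opposite signs and cancels, and identify the surviving boundary-crossing edges with the right-hand side via the sign convention for $\nu_D$. The only cosmetic difference is that you package the cancellation via the coefficient $\mathbf 1_D(e_-)-\mathbf 1_D(e_+)$ after swapping the order of summation, whereas the paper exhibits the cancellation by fixing a single interior edge $e_{vw}$ and pairing its two appearances explicitly; the content is the same.
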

\begin{proof}
	 We will define
	\[
	  \int \limits_{D}\nabla\cdot i(v)dv=\sum \limits_{v\in D}\nabla\cdot i(v)\#\partial \{v\}.
	\]
	We will verify that if $e\in E$ such that, $e_{\pm}\in D$, then, the contribution of $i(e)$ in the integral $\int \limits_{D}\nabla\cdot i(v)$ is zero (i.e, the contribution is only by part of $e\in E$ such that if $e_{\pm}\in D$, then $e_{\mp}\in \partial D$). Let us see this. Let $v,w\in D$ such that $v\sim w$ ($v$ is adjacent to $w$) and define the edge joining $v$ with $w$ by $e_{vw}=e_{wv}$. Without loss of generality, suppose that $(e_{vw})_-=v$ and $(e_{vw})_+=w$, then 
	\begin{align*}
	 \nabla\cdot i(v)\#\partial \{v\}= & \sum \limits_{\{e\,:\,e_-=v\}}i(e)-\sum \limits_{\{e\,:\,e_+=v\}}i(e)\\
	 & = i(e_{vw})+\sum_{\substack{\{e\,:\, e_-=v,\}\\ e_+\neq w}}i(e)-\sum \limits_{\{e\,:\,e_+=v\}}i(e)
	\end{align*}
	and
	\begin{align*}
	   \nabla\cdot i(w)\#\partial \{w\}= & \sum \limits_{\{e\,:\,e_-=w\}}i(e)-\sum \limits_{\{e\,:\,e_+=w\}}i(e)\\
	  & =\sum \limits_{\{e\,:\,e_-=w\}}i(e)-\sum_{\substack{\{e\,:\, e_+=w,\}\\ e_-\neq v}}i(e)-i(e_{wv}).
	\end{align*}
	Thus, in the term  $(\nabla\cdot i(v)\#\partial \{v\}+\nabla\cdot i(w)\#\partial \{w\})$ the contribution by $i(e_{vw})=0$. This proves that the contribution is only by part of $e\in E$ such that if $e_{\pm}\in D$ then, $e_{\mp}\in \partial D$. It means that
	\begin{align*}
	\sum \limits_{x\in D}\nabla\cdot i(v)\#\partial \{v\} & =\sum_{\substack{\{e:e_-\in D,\}\\e_+\in \partial D}}i(e)-\sum_{\substack{\{e:e_+\in D,\}\\e_-\in \partial D}}i(e)\\
	&=\sum \limits_{\{e\in \partial D\}}i(e)\nu(e)\\
	&=\int  \limits_{\partial D}i\cdot \nu. 
	\end{align*}
\end{proof}

\begin{definition}
	For $i,j: E\rightarrow \mathbb{R}$ the product $i\cdot j(v)$ is defined by
	\[
	i\cdot j(v):=\frac{1}{2\#\partial \{v\}}\sum \limits_{e\in \partial \{v\}}i(e)j(e),
	\]
	where $\# \partial \{v\}$ denotes the number of elements that are adjacent to the vertex $v$ and 

      \[
      \partial \{v \}=\{ e\in E: v=e_- \text{ or } v=e_+ \}
      \]
    
\end{definition}

Now, we will prove an analogue result of the formula of integration by parts in the graph which will be very useful.

\begin{theorem}[Integration by Parts]
	Let $U:V\rightarrow \mathbb{R}$ and $i:E\rightarrow \mathbb{R}$ then, for $D\subseteq V\setminus\{\infty\}$, we have the identity
	\[
	  \int \limits_{D}U\nabla\cdot i=\int \limits_{\partial D}(\overline{U}i\cdot \nu)-\int \limits_{D}i\cdot DU.
	\]\label{parts}
    where
	\[
	  \overline{U}:E\rightarrow \mathbb{R}, \text{ with } \overline{U}(e)=\frac{1}{2}(U(e_+)+U(e_-)).
	\]
\end{theorem}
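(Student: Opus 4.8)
The plan is to mimic the classical proof of integration by parts by combining the Divergence Theorem on graphs (already proved) with a product-rule type identity for the discrete divergence. Specifically, I would first establish the Leibniz-type formula
\[
  \nabla\cdot(U i)(v) \;=\; U(v)\,\nabla\cdot i(v) \;+\; \big(i\cdot DU\big)(v) \;-\; (\text{boundary correction}),
\]
where $Ui:E\to\mathbb R$ is a suitable edge function — the natural candidate being $(Ui)(e)=\overline U(e)\,i(e)$ with $\overline U(e)=\tfrac12(U(e_+)+U(e_-))$, since that is exactly the averaging operator appearing in the statement. The key algebraic check is, for a fixed interior vertex $v$, to expand $\nabla\cdot(\overline U i)(v)\,\#\partial\{v\}=\sum_{e_-=v}\overline U(e)i(e)-\sum_{e_+=v}\overline U(e)i(e)$, substitute $\overline U(e)=\tfrac12(U(e_+)+U(e_-))$, and split each $U(e_+)+U(e_-)$ into the "$U(v)$ part" and the "other endpoint part". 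The $U(v)$ part reassembles into $U(v)\,\nabla\cdot i(v)\,\#\partial\{v\}$, while the other-endpoint part, after matching the sign conventions $\nu_v(e)=\pm1$, reassembles into $\tfrac12\sum_{e\in\partial\{v\}}i(e)\,DU(e)\,\nu_v(e)\cdot(\pm1)$ — and one checks the signs work out so this equals $\#\partial\{v\}\,(i\cdot DU)(v)$ as defined. I expect the factor of $\tfrac12$ in the definition of $i\cdot j$ and the sign bookkeeping in $DU(e)=U(e_+)-U(e_-)$ versus $\nu_v(e)$ to be the delicate points that make the constants match.

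Once the pointwise identity $\nabla\cdot(\overline U i)(v)=U(v)\,\nabla\cdot i(v)+(i\cdot DU)(v)$ holds for every $v\in D$ (here $D$ is interior, so there is no boundary correction at vertices of $D$ itself), I would multiply by $\#\partial\{v\}$, sum over $v\in D$, and read the left side as $\int_D \nabla\cdot(\overline U i)$. Applying the Divergence Theorem on graphs to the edge field $\overline U i$ gives
\[
  \int_D \nabla\cdot(\overline U i) \;=\; \int_{\partial D} (\overline U i)\cdot\nu\, dS \;=\; \int_{\partial D} \overline U\, i\cdot\nu\, dS,
\]
which is precisely the boundary term in the statement. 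Rearranging yields
\[
  \int_D U\,\nabla\cdot i \;=\; \int_{\partial D} \overline U\, i\cdot\nu\, dS \;-\; \int_D i\cdot DU,
\]
as claimed.

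The main obstacle, as flagged above, is verifying that the constants and signs in the Leibniz identity come out exactly right given the particular normalizations in the paper — the $\tfrac{1}{\#\partial\{v\}}$ in $\nabla\cdot i$, the extra $\tfrac12$ in the definition of $i\cdot j$, and the degree weight $\#\partial\{v\}$ built into $\int_D(\cdot)\,dv$. A secondary subtlety is that the definition of $i\cdot DU$ only sums over $e\in\partial\{v\}$, i.e.\ edges incident to $v$, so in the expansion I must be careful that every term produced by the "other-endpoint part" is indeed an edge incident to $v$ (it is, by construction) and that no contributions from edges with both endpoints outside $D$ sneak in — which is guaranteed because we only sum $v$ over $D$ and the divergence at each such $v$ only involves edges incident to $v$. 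I would also double-check the edge-cancellation phenomenon used in the Divergence Theorem proof still applies verbatim to the field $\overline U i$ (it does, since that argument is purely about the skew-symmetric bookkeeping of $i(e)$ over interior edges and does not use any special property of $i$).
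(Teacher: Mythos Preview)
Your proposal is correct and follows essentially the same approach as the paper: first establish the pointwise product rule $\nabla\cdot(\overline U i)(v)=U(v)\,\nabla\cdot i(v)+(i\cdot DU)(v)$ by expanding and splitting $\overline U(e)=\tfrac12(U(e_+)+U(e_-))$ into the $U(v)$ part and the other-endpoint part, then apply the Divergence Theorem to the edge field $\overline U i$ and rearrange. The paper handles the sign/constant bookkeeping you flag by adding and subtracting $\tfrac12 U(v)\nabla\cdot i(v)$ after isolating the $U(v)$ part, which cleanly converts the remaining terms into $\frac{1}{\#\partial\{v\}}\sum_{e\in\partial\{v\}}\tfrac12 DU(e)i(e)=(i\cdot DU)(v)$; this is exactly the verification you anticipated needing.
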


\begin{proof}
	First, it is necessary to verify the product rule
	\[
	  \nabla\cdot (\overline{U}i)(v)=U(v)\nabla\cdot i(v)+(i\cdot DU)(v)
	\]
	
	Note that
\begin{gather*}
     \nabla\cdot(\overline{U}i)(v)=\frac{1}{\# \partial \{ v \}}\bigg( \sum \limits_{e_-=v}\overline{U}(e)i(e)- \sum \limits_{e_+=v} \overline{U}(e)i(e)\bigg) \\
=\frac{1}{\#\partial \{v\}}\bigg(\sum \limits_{e_-=v}\frac{(U(v)+U(e_+))}{2}i(e)-\sum \limits_{e_+=v}\frac{(U(v)+U(e_-))}{2}i(e)\bigg)\\
 =\frac{1}{\#\partial \{v\}}\frac{U(v)}{2}\bigg(\sum \limits_{e_-=v}i(e)-\sum \limits_{e_+=v}i(e)\bigg)+\frac{1}{\#\partial \{v\}}\bigg(\sum \limits_{e_-=v}\frac{U(e_+)i(e)}{2}-\sum \limits_{e_+=v}\frac{U(e_-)i(e)}{2}\bigg)\\
 =\frac{U(v)}{2}\nabla\cdot i(v)+\frac{1}{\#\partial \{v\}}\bigg(\sum \limits_{e_-=v}\frac{U(e_+)i(e)}{2}-\sum \limits_{e_+=v}\frac{U(e_-)i(e)}{2}\bigg).
\end{gather*}
Adding and substracting the term
\[
  \frac{U(v)\nabla\cdot i(v)}{2},
\]
in the above equation, we get
\begin{align*}
\nabla\cdot(\overline{U}i)(v)&=U(v)\nabla\cdot i(v)+\frac{1}{\#\partial \{v\}}\sum \limits_{e\in \partial \{v\}}\frac{DU(e)i(e)}{2}\\
&=U(v)\nabla\cdot i(v)+(i\cdot DU)(v).
\end{align*}
Thus, applying the Gauss formula to the function $(\overline{U}i)$ we get
\begin{align*}
\int \limits_{D}\nabla\cdot (\overline{U}i)=\int \limits_{\partial D}Ui\cdot \nu
\end{align*}
and applying the product rule for $\nabla \cdot (\overline{U}i)$ we deduce
\[
  \int \limits_{D}U\nabla\cdot i=\int \limits_{\partial D} \overline{U} i\cdot \nu-\int \limits_{D}i\cdot DU.
\]
\end{proof}

\subsection{Maximum Principle.}

We will define the Laplacian in the graph and then we will prove analogues for  the mean value formula and the maximum principle.
\begin{definition}
	Let $G=(V,E)$ a graph, we define $\Delta U:V\rightarrow \mathbb{R}$ by
	\[
	  \Delta U(v)=\frac{1}{\#\partial \{v\}}\sum \limits_{w\sim v}(U(w)-U(v)).
	\]

\end{definition}
\begin{definition}
	We say that the function $U$ is subharmonic (resp. superharmonic) if $\Delta U\geq 0$ (resp. $\Delta U\leq 0$).
\end{definition}
\begin{remark}
	If $U$ be subharmonic in $D\subseteq V\setminus\{\infty\}$ then, for each $v\in D$ we have
	\[
	  U(v)\leq \frac{1}{\#\partial \{v\}}\sum \limits_{w\sim v}U(w).
	\]
\end{remark}

Now, we will present the main result in this section.
\begin{theorem}[Maximum Principle]
	Let $U$ be subharmonic in $D$. Then, $U$ attains its maximum in $\partial D$.
\end{theorem}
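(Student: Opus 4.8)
The plan is to run the standard strong-maximum-principle argument by contradiction, the only two ingredients being the sub-mean-value inequality from the Remark and the connectedness of $G$. Set $\overline{D} = D \cup \partial D$. Since $V$ is finite, $U$ attains a maximal value $M := \max_{\overline{D}} U$, and what must be shown is that this value is already attained at some boundary vertex, i.e. $\max_{\partial D} U = M$. I would assume, for contradiction, that this fails; then the level set
\[
A := \{ v \in \overline{D} : U(v) = M \}
\]
is nonempty and satisfies $A \subseteq D$, since it misses $\partial D$ entirely.

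The first step is to show that $A$ is closed under passing to adjacent vertices of $G$: if $v \in A$ and $w \sim v$, then $w \in A$. Indeed, $v \in A \subseteq D$, so the Remark applies and gives $M = U(v) \le \frac{1}{\#\partial\{v\}} \sum_{w' \sim v} U(w')$; since $U(w') \le M$ for every term, equality of the average with $M$ forces $U(w') = M$ for \emph{all} neighbours $w'$ of $v$. Each such $w'$ lies in $\overline{D}$: either $w' \in D$, or $w' \notin D$, in which case $w' \sim v \in D$ makes $w' \in \partial D$ by the definition of the boundary. Hence $U(w') = M$ with $w' \in \overline{D}$ means $w' \in A$; and because $A \cap \partial D = \emptyset$ we in fact get $w' \in D$ as well, so the propagation never leaves $D$ prematurely.

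Now connectedness finishes the argument: starting from any fixed $v_0 \in A$ and iterating the previous step along a path in $G$ from $v_0$ to an arbitrary vertex, every vertex of $G$ is forced into $A$, so $A = V$. But $A \subseteq D \subseteq V \setminus \{\infty\}$, so $\infty \notin A$ — a contradiction. Therefore the assumption was false and $U$ attains its maximum on $\partial D$. The one place that genuinely needs attention is this last reduction: it relies on $D$ being a \emph{proper} subset of $V$ (which is exactly why the hypothesis $D \subseteq V\setminus\{\infty\}$ is imposed) and on $G$ being connected, so that the propagation from $v_0$ actually reaches a boundary vertex; everything else is routine. A minor interpretational point worth stating is that the conclusion should be read as $\max_{\overline{D}} U = \max_{\partial D} U$, so $U$ is implicitly regarded as defined on all of $\overline{D}$, not merely on $D$.
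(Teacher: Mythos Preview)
Your argument is correct and follows essentially the same propagation-via-sub-mean-value idea as the paper's proof. The only packaging difference is that the paper argues directly (if the maximum is attained at an interior vertex then propagation forces $U\equiv M$ out to $\partial D$), whereas you phrase it as a contradiction with the containment $A\subseteq D\subsetneq V$; your version is more explicit about where connectedness and the hypothesis $D\subseteq V\setminus\{\infty\}$ enter, but the core mechanism is identical.
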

\begin{proof}
	Suppose that there exists $v\in D$ such that, 
	\[
	  U(v)=M=\max \limits_{\overline{D}}U
	\]
	since $U$ subharmonic, then
	\[
	  U(v)\leq \frac{1}{\#\partial \{v\}}\sum \limits_{w\sim v}U(w)
	\]
	but $U(v)\geq U(w)$ for all $w\in D$. Thus, $U(w)=M$ for $w\sim v$. Continuing in this form over any $w\sim v$ we can conclude that $U(z)=M$ for $z\sim w$ and proceeding similarly, we have that $U=M$ on $\partial D$.
\end{proof}
\begin{remark}
	If $U$ is superharmonic, the same argument for $-U$ prove that $U$ attains its minimum on $\partial D$.
\end{remark}

\begin{remark}
    Note that for us definition of Laplacian and Divergence in the graph, we have 
\begin{align*}
\nabla\cdot Du(v)&=\frac{1}{\#\partial\{v\}}\bigg(\sum \limits_{e_-=v}Du(e)-\sum \limits_{e_+=v}Du(e)\bigg)\\
&= \frac{1}{\#\partial\{v\}}\bigg(\sum \limits_{e_-=v}(u(e_+)-u(v))-\sum \limits_{e_+=v}(u(v)-u(e_-))\bigg)\\
&=\frac{1}{\#\partial \{v\}}\sum \limits_{w\sim v}(u(w)-u(v))=\Delta u(v).
\end{align*}
\end{remark}

\begin{definition}
	The fundamental solution for the Laplacian in the graph, will be the function $\Phi_x(v)$ such that, 
	\begin{equation*}
	\begin{cases}
	-\Delta \Phi_x(v)=1_{x(v)}\\
	\Phi_x(\infty)=0
	\end{cases}
	\end{equation*}
	where 
	\begin{equation*}
	1_{x(v)}=
	\begin{cases}
	1 \text{ if } v=x\\
	0 \text{ if } v\neq x
	\end{cases}
	\end{equation*}
\end{definition}
 The vertex ``$\infty$'' in the graph $G$ is any vertex that we choose, and we name it of this form, these vertex was analogue to infinity in $\mathbb{R}^n$.
 
 Now, to solve $-\Delta \Phi_x(v)=1_{x(v)}$ in the graph is equivalent to solve a system of linear equations, which has solution if, in the homogeneous problem the unique solution is the zero solution. 
 
 By the maximum principle $-\Delta \Phi_x(v)=0$ with boundary equal zero, the unique solution will be zero. This guarantee that the fundamental solution there exist.
\begin{theorem}
	The fundamental solution $\Phi_x(v)$ satisfies 
	\[
	\Phi_x(v)=\Phi_v(x) \text{ for } \, (x\neq v).
	\]
\end{theorem}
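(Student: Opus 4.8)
The plan is to transplant the classical argument for symmetry of the Green's function of $-\Delta$: pit $\Phi_x$ and $\Phi_v$ against one another through a discrete Green's second identity, which on the graph comes from applying the integration-by-parts formula of Theorem~\ref{parts} twice. Throughout I take $D=V\setminus\{\infty\}$, so that $\partial D=\{\infty\}$ and the defining relations read $-\Delta\Phi_x=1_x$, $-\Delta\Phi_v=1_v$ on $D$, with $\Phi_x(\infty)=\Phi_v(\infty)=0$; one may assume $x,v\in D$, the remaining cases being trivial.

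First I would apply Theorem~\ref{parts} with $U=\Phi_x$ and $i=D\Phi_v$; using the remark that $\nabla\cdot D\Phi_v=\Delta\Phi_v$, this gives
\[
\int_D \Phi_x\,\Delta\Phi_v=\int_{\partial D}\overline{\Phi_x}\,D\Phi_v\cdot\nu-\int_D D\Phi_v\cdot D\Phi_x .
\]
Applying the same identity with $x$ and $v$ interchanged and subtracting, the two Dirichlet-energy integrals cancel because the edge product $i\cdot j$ is symmetric, leaving
\[
\int_D\big(\Phi_x\,\Delta\Phi_v-\Phi_v\,\Delta\Phi_x\big)=\int_{\partial D}\big(\overline{\Phi_x}\,D\Phi_v-\overline{\Phi_v}\,D\Phi_x\big)\cdot\nu .
\]
The boundary term vanishes: every edge $e$ contributing to $\int_{\partial D}$ joins $\infty$ to some $w\in D$, and since $\Phi_x(\infty)=\Phi_v(\infty)=0$ we have $\overline{\Phi_x}(e)=\tfrac12\Phi_x(w)$ and $D\Phi_x(e)=\varepsilon_e\Phi_x(w)$ for a sign $\varepsilon_e=\pm1$ that is the same for $v$, so $\overline{\Phi_x}(e)\,D\Phi_v(e)-\overline{\Phi_v}(e)\,D\Phi_x(e)=\tfrac{\varepsilon_e}{2}\big(\Phi_x(w)\Phi_v(w)-\Phi_v(w)\Phi_x(w)\big)=0$.

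It remains to evaluate the bulk term. On $D$ one has $\Delta\Phi_v=-1_v$, hence by the definition of $\int_D(\cdot)\,dv$ we get $\int_D\Phi_x\,\Delta\Phi_v=-\#\partial\{v\}\,\Phi_x(v)$, and symmetrically $\int_D\Phi_v\,\Delta\Phi_x=-\#\partial\{x\}\,\Phi_v(x)$; the identity above thus collapses to $\#\partial\{v\}\,\Phi_x(v)=\#\partial\{x\}\,\Phi_v(x)$. This last point is the real obstacle: the degree factors introduced by the measure $dv$ do not disappear on their own, so the computation lands exactly on $\Phi_x(v)=\Phi_v(x)$ only when $\#\partial\{v\}=\#\partial\{x\}$ — in particular for regular $G$, in which case the steps above close the proof as written. (The same weighted relation also follows from the fact that, restricted to $D$, the operator $-\Delta$ equals $\mathrm{diag}(\#\partial\{v\})^{-1}\widetilde L$ with $\widetilde L$ symmetric, hence is self-adjoint for $\langle f,g\rangle=\sum_v f(v)g(v)\#\partial\{v\}$, so pairing $\Phi_x=(-\Delta)^{-1}1_x$ with $1_v$ gives it directly.) I would therefore either record the symmetry in the weighted form $\#\partial\{x\}\,\Phi_v(x)=\#\partial\{v\}\,\Phi_x(v)$ or add the standing hypothesis that $G$ be regular.
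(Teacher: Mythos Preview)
Your approach is exactly the paper's: apply the integration-by-parts formula (Theorem~\ref{parts}) with $U=\Phi_x$, $i=D\Phi_v$, then swap roles and compare through the common Dirichlet term $\int_D D\Phi_x\cdot D\Phi_v$. The paper does not subtract; it simply equates each of $-\Phi_x(v)$ and $-\Phi_v(x)$ to $-\sum_z D\Phi_x(z)D\Phi_v(z)$ and concludes.

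Your misgivings are well founded and in fact sharper than the paper's treatment. The paper writes the boundary term as $\int_{\partial V=\infty}\Phi_x\,D\Phi_v\cdot\nu$ and dismisses it from $\Phi_x(\infty)=0$ alone; but the formula of Theorem~\ref{parts} has $\overline{\Phi_x}(e)=\tfrac12(\Phi_x(e_+)+\Phi_x(e_-))$ on the boundary, which does \emph{not} vanish on an edge $e$ joining $\infty$ to $w\in D$. Your cancellation argument (that $\overline{\Phi_x}\,D\Phi_v-\overline{\Phi_v}\,D\Phi_x$ vanishes edge by edge on $\partial D$) is the correct fix. Likewise, the paper silently drops the degree weight when it writes $\int_V\Phi_x\Delta\Phi_v=-\Phi_x(v)$: with the measure $dv$ used in Theorem~\ref{parts} one actually gets $-\#\partial\{v\}\,\Phi_x(v)$, exactly as you computed. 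So the honest outcome of this argument is the weighted symmetry $\#\partial\{v\}\,\Phi_x(v)=\#\partial\{x\}\,\Phi_v(x)$, and the unweighted statement $\Phi_x(v)=\Phi_v(x)$ follows only under a regularity hypothesis on $G$ (or after redefining the Laplacian without the $1/\#\partial\{v\}$ normalization). Your diagnosis and your proposed remedy are both correct.
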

\begin{proof}
	 From the integration by parts formula, we have 
	\[
	\int \limits_{V}\Phi_x(z)\Delta \Phi_v(z)=\int \limits_{\partial V=\infty}\Phi_x(z)D\Phi_v(z)\cdot \nu-\int \limits_{V}D\Phi_x(z)\cdot D\Phi_v(z),
	\]
	but $\Phi_x(\infty)=0$ and $\Phi_v$ is harmonic for $z\neq v$, thus
	\[
	\int \limits_{V}\Phi_x(z)\Delta \Phi_v(z)=-\Phi_x(v)=-\sum \limits_{z}D\Phi_x(z)D\Phi_v(z),
	\]
	similarly
	\[
	\int \limits_{V}\Phi_v(z)\Delta \Phi_x(z)=\int \limits_{\partial V=\infty}\Phi_v(z)D\Phi_x(z)\cdot\nu-\int \limits_{V}D\Phi_v(z)\cdot D\Phi_x(z),
	\]
	thus
	\[
	-\Phi_v(x)=-\sum \limits_{z}D\Phi_x(z)D\Phi_v(z).
	\]
	Therefore
	\[
	\Phi_x(v)=\Phi_v(x).
	\]
	
\end{proof}

\newpage

\section{The Main Result}

In this section, we will present Perron's method for graphs, where a harmonic function is constructed as the lower envelope of a family of subharmonic functions. Then, in the next subsection, we will study the Poisson problem and construct its solution through a sweeping process known as balayage.

\subsection{The Perron's Method}

Let $D\subseteq V\setminus \{\infty\}$ be bounded and let $g$ defined on $\partial D$. Set
\[
S_g=\{\phi: \phi \text{ subharmonic in } D \text{ and } \phi\leq g \text{ on } \partial D\}.
\]
\begin{theorem}
	The function defined by
	\[
	u(x)=\sup \limits_{\phi \in S_g}\phi(x) 
	\]
	is harmonic in $D$.
\end{theorem}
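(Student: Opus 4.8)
The plan is to establish the two one-sided inequalities $\Delta u\ge 0$ and $\Delta u\le 0$ at every vertex of $D$ and conclude that $\Delta u\equiv 0$ there. Before doing so I would record that the supremum is well defined: the constant function $c\equiv\min_{\partial D}g$ has $\Delta c=0\ge 0$ and $c\le g$ on $\partial D$, so $S_g\neq\varnothing$; and if $\phi\in S_g$ then, being subharmonic in $D$, the Maximum Principle gives $\max_{\overline D}\phi=\max_{\partial D}\phi\le\max_{\partial D}g$. Hence $u(x)=\sup_{\phi\in S_g}\phi(x)$ is a finite real number for every $x\in\overline D$, and $u\le g$ on $\partial D$.

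The subharmonicity of $u$ is the easy half and needs no limiting argument. Fix $v\in D$. For every $\phi\in S_g$, subharmonicity of $\phi$ together with $\phi\le u$ gives $\phi(v)\le\frac{1}{\#\partial\{v\}}\sum_{w\sim v}\phi(w)\le\frac{1}{\#\partial\{v\}}\sum_{w\sim v}u(w)$; taking the supremum over $\phi\in S_g$ on the left yields $u(v)\le\frac{1}{\#\partial\{v\}}\sum_{w\sim v}u(w)$, i.e. $\Delta u(v)\ge 0$. In particular $u$ itself belongs to $S_g$, so the defining supremum is attained.

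The superharmonicity of $u$ is the real content, and I would prove it by a ``harmonic replacement at a single vertex''. Suppose, for contradiction, that $\Delta u(v_0)>0$ for some $v_0\in D$, i.e. $u(v_0)<m:=\frac{1}{\#\partial\{v_0\}}\sum_{w\sim v_0}u(w)$. Define $\tilde u$ on $\overline D$ by $\tilde u(v_0)=m$ and $\tilde u(v)=u(v)$ for $v\ne v_0$. Then $\tilde u\ge u$ everywhere and $\tilde u=u$ on $\partial D$ (since $v_0\notin\partial D$), so $\tilde u\le g$ on $\partial D$. At $v_0$ we get $\Delta\tilde u(v_0)=\frac{1}{\#\partial\{v_0\}}\sum_{w\sim v_0}u(w)-m=0$; at any other $v\in D$ we have $\tilde u(v)=u(v)$ while $\tilde u(w)\ge u(w)$ for all $w$, hence $\Delta\tilde u(v)\ge\Delta u(v)\ge 0$. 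Therefore $\tilde u\in S_g$, yet $\tilde u(v_0)=m>u(v_0)=\sup_{\phi\in S_g}\phi(v_0)$, a contradiction. Thus $\Delta u(v)\le 0$ for every $v\in D$.

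Combining the two steps gives $\Delta u\equiv 0$ on $D$, so $u$ is harmonic. The step I expect to require the most care is the replacement argument: one must check that raising the value of a subharmonic function at a single interior vertex to the average of its neighbours preserves subharmonicity at every other vertex — which works precisely because this operation only enlarges (never shrinks) the neighbour-sums at the adjacent vertices while leaving the values there unchanged — and makes $v_0$ a point of exact harmonicity. Everything else is soft, since finiteness of $D$ removes the compactness/normal-family issues of the classical Perron method and the Maximum Principle is already available; one only uses in passing that every $v\in D$ has $\#\partial\{v\}\ge 1$, so that $\Delta$ is defined on $D$, which holds because the graph is connected.
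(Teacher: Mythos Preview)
Your proof is correct and follows essentially the same approach as the paper: both establish subharmonicity of $u$ from that of the competitors, and then use the single-vertex harmonic lifting (your $\tilde u$, the paper's $u^{B_{x_0}}$) to force equality at each interior point. The only differences are cosmetic: you phrase the lifting step as a contradiction while the paper argues directly, and you add the preliminary checks that $S_g\neq\varnothing$ and that the supremum is finite and attained, which the paper leaves implicit.
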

\begin{proof}
	\hfill
	\begin{itemize}
		\item[i)] Let us see that $u$ is subharmonic.
		Fix $x_0\in D$. By definition of $u$ for all $\epsilon >0$ there exists $\phi \in S_g$ such that $u(x_0)\leq \phi(x_0)+\epsilon$, thus
		\[
		u(x_0)\leq \frac{1}{\#\partial \{x_0\}}\sum \limits_{v\sim x_0}\phi(v)+\epsilon.
		\]
		but $\phi(v)\leq u(v)$, therefore
		\[
		u(x_0)\leq\frac{1}{\#\partial \{x_0\}}\sum \limits_{v\sim x_0}u(v)+\epsilon
		\]
		and since $\epsilon$ is arbitrary we have 
		\[
		u(x_0)\leq\frac{1}{\#\partial \{x_0\}}\sum \limits_{v\sim x_0}u(v)
		\]
		implying that, $u$ is subharmonic in $D$.
		\item[ii)] Now, let us define $u^{B_{x_0}}(x)$  by
        
		\begin{equation*}
		u^{B_{x_0}}(x)=
		\begin{cases}
		\frac{1}{\#\partial \{x\}}\sum \limits_{v\sim x_0}u(v)\,\text{ if } x=x_0,\\
		u(x), \,\, x\neq x_0,
		\end{cases}
		\end{equation*}
        
		 known as the "harmonic lifting" of $u$. This definition implies that $u(x)\leq u^{B_{x_0}}(x)$ and 
		\begin{align*}
		u(x)\leq& \frac{1}{\#\partial \{x\}}\sum \limits_{v\sim x}u(v)\\
		&\leq \frac{1}{\#\partial \{x\}}\sum \limits_{v\sim x} u^{B_{x_0}}(v).
		\end{align*}
		Hence, for $x\neq x_0$ we have $u(x)= u^{B_{x_0}}(x)$, and then
		\[
		u^{B_{x_0}}(x)\leq \frac{1}{\#\partial \{x\}}\sum \limits_{v\sim x} u^{B_{x_0}}(v)
		\]
		where the equality holds when $x=x_0$. Thus, $u^{B_{x_0}}(x)$ is subharmonic in $D$, therefore, $u^{B_{x_0}}\in S_g$.

		\item[iii)] Note that $u^{B_{x_0}}(x_0)\geq u(x_0)$ but by definition of $u$, $u(x_0)\geq u^{B_{x_0}}(x_0)$ which implies that $u(x_0)=u^{B_{x_0}}(x_0)$. The latter guarantees that $u$ is harmonic in $x_0$ and since $x_0$ is arbitrary we have that $u$ is harmonic in $D$.
	\end{itemize}
\end{proof}
\begin{remark}
	Note that in the proof of Perron's method in the continuous case we had to be careful because the proof was more constructive, and we needed to prove uniform convergence using  Harnack's inequality. However, the proof in the graph is easier due to punctual and uniform convergence are equivalent.
\end{remark}

\subsection{The Poisson's Problem}

Now, we will present the main result in the Classic Balayage Process for graphs.

\begin{theorem}
	Let $D\subseteq V\setminus \{\infty\}$, $\mu_0: V\rightarrow [0,+\infty)$ such that $\text{support}\, \mu_0\subseteq D$ and $\{x_i\}$ a collection of vertices such that for every $N\in \mathbb{N}$
	\[
	  \bigcup \limits_{i\geq N}\{x_i\}=D.
	\]
	Let $u_0=0$ (the zero function) and define recursively

           \[
           \mu_i(x)=
             \begin{cases}
                 \mu_{i-1}(x)+\mu_{i-1}(x_i)\Delta \delta_{x_i}(x) \text{ if } x\sim x_i \text{ or } x=x_i\\
                 \mu_{i-1}(x) \text{ on other case.}
             \end{cases}
           \]
           where $\Delta \delta_{x_i}(x_i)=-1$ and $\Delta \delta_{x_i}(x)=1/\#\partial \{ x_i \}$ if $x \sim x_i$.
    and 

    \[
           u_i(x)=
             \begin{cases}
                 u_{i-1}(x)+\mu_{i-1}(x_i) \delta_{x_i}(x) \text{ if } x=x_i\\
                 u_{i-1}(x) \text{ if } x\neq x_i
             \end{cases}
           \]
	where $\delta_{x_i}(x)=1$ if $x=x_i$ and $\delta_{x_i}(x)=0$ if $x\neq x_i$.
    
\end{theorem}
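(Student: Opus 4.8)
The plan is to compress the entire recursion into a single conserved identity, deduce convergence of $u_i$ from monotonicity together with a maximum-principle bound, and then identify the limit as the solution of \eqref{principal_problem}.

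\emph{Step 1 (the invariant).} I would first prove by induction on $i$ that
\[
 -\Delta u_i=\mu_0-\mu_i \qquad \text{on } V .
\]
It holds for $i=0$, where $u_0\equiv 0$. For the inductive step, $u_i-u_{i-1}=\mu_{i-1}(x_i)\,\delta_{x_i}$, so $\Delta u_i-\Delta u_{i-1}=\mu_{i-1}(x_i)\,\Delta\delta_{x_i}$; but the definition of $\mu_i$ says exactly $\mu_i-\mu_{i-1}=\mu_{i-1}(x_i)\,\Delta\delta_{x_i}$, and subtracting while using the inductive hypothesis gives $-\Delta u_i+\mu_i=-\Delta u_{i-1}+\mu_{i-1}=\mu_0$. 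Along the same induction I would record three facts: (a) $\mu_i\ge 0$ on $V$, since each step deletes the mass at $x_i$ and redistributes it onto the neighbours of $x_i$ with nonnegative weights; (b) $u_i$ vanishes off $D$ (only the values $u_i(x_i)$ with $x_i\in D$ are ever changed), hence $u_i=0$ on $\partial D$; (c) $(u_i)_i$ is nondecreasing, because the increment $\mu_{i-1}(x_i)\,\delta_{x_i}$ is $\ge 0$.

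\emph{Step 2 (convergence of $u_i$).} Let $P:=\sum_{x\in D}\mu_0(x)\,\Phi_x$, so that by the existence theory for the fundamental solution $-\Delta P=\mu_0$ on $V$ and $P(\infty)=0$. By Step 1, $\Delta(u_i-P)=\mu_i\ge 0$ on $V\setminus\{\infty\}$, so $u_i-P$ is subharmonic there; since $\partial(V\setminus\{\infty\})=\{\infty\}$, the Maximum Principle gives $u_i-P\le(u_i-P)(\infty)=0$, i.e.\ $u_i\le P$ on all of $V$. Thus $(u_i)_i$ is nondecreasing and bounded above by the fixed function $P$, hence converges pointwise — and, $V$ being finite, uniformly — to some $u\colon V\to\mathbb{R}$. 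Since $\Delta$ is a fixed linear map on $\mathbb{R}^V$, $\mu_i=\mu_0+\Delta u_i$ converges as well, say to $\mu_\infty:=\mu_0+\Delta u$, and letting $i\to\infty$ in the invariant yields $-\Delta u=\mu_0-\mu_\infty$ on $V$.

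\emph{Step 3 (identifying the limit; uniqueness).} Fix $x\in D$. By hypothesis there are infinitely many $i$ with $x_i=x$, and for each of them the recursion forces $\mu_i(x)=\mu_{i-1}(x)+\mu_{i-1}(x)\,\Delta\delta_x(x)=0$ because $\Delta\delta_x(x)=-1$. Passing to the limit along these indices gives $\mu_\infty(x)=0$; as $x$ was arbitrary, $\mu_\infty\equiv 0$ on $D$, hence $-\Delta u=\mu_0$ on $D$. With $u=0$ on $\partial D$ from Step 1(b), this shows $u$ solves \eqref{principal_problem}. Uniqueness is immediate from the Maximum Principle: two solutions $u,\tilde u$ have difference $h:=u-\tilde u$ harmonic in $D$ with $h=0$ on $\partial D$, and applying the principle to $h$ and to $-h$ forces $h\equiv 0$ on $\overline D$.

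\emph{Where the difficulty lies.} Step 1 is routine, and is the only place the precise normalisation in the definition of $\mu_i$ is used. The real content is the uniform bound $u_i\le P$ in Step 2: choosing the comparison potential $P$ and invoking the global Maximum Principle with boundary vertex $\infty$ is what turns the heuristic ``the mass eventually leaks out of $D$'' into an actual convergence statement, and it is also what makes the total swept mass $\sum_i\mu_{i-1}(x_i)=\sum_{x\in D}u(x)$ finite. Step 3 is then bookkeeping, the one delicate point being that $\mu_\infty(x)=0$ must be read off along the subsequence with $x_i=x$, not from the full sequence.
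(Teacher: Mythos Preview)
Your proof is correct and follows essentially the same architecture as the paper's: establish the invariant $-\Delta u_i=\mu_0-\mu_i$, note monotonicity of $u_i$, bound $u_i$ above via the Newtonian potential of $\mu_0$, and then kill $\mu_\infty$ on $D$ using the subsequence of indices with $x_i=x$. The one noteworthy difference is in the boundedness step: the paper decomposes $u_i=\phi_i+\psi_i$ with $\phi_i=\Phi^{\mu_0}-\Phi^{\mu_i}$ and $\psi_i$ harmonic (obtained via Perron's method) matching $-\phi_i$ on $\partial D$, whereas you compare $u_i$ directly with $P=\Phi^{\mu_0}$ by observing that $u_i-P$ is globally subharmonic on $V\setminus\{\infty\}$ and vanishes at $\infty$. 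Your route is cleaner and avoids invoking Perron's method inside this proof, but the underlying idea---control $u_i$ by the potential of the initial measure---is the same.
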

\begin{proof}
    By the definition of $u_i$ and $\mu_i$, it is satisfied
	\begin{equation}
	\begin{cases}
	 -\Delta u_i=\mu_0-\mu_i \text{ in } D,\\
	u_i=0 \text{ on } \partial D.
	\end{cases}\label{ui}
	\end{equation}
	Now, let us see that $u_i$ converges. First, note that $u_i=u_{i-1}+\mu_{i-1}(x_i)\delta_{x_i}$ thus, $u_i\geq u_{i-1}$ implying that, $u_i$ is increasing monotone. We just need to prove that $u_i$ is bounded in $D$. To see this, since $u_i$ satisfies \eqref{ui} we define $\phi_i=\Phi^{\mu_0}-\Phi^{\mu_i}$ where $\Phi$ is the fundamental solution and $\Phi^{\mu_0}$, $\Phi^{\mu_i}$ are the Newtonian potential associated to $\mu_0$, $\mu_i$ respectively. Thus,
	\[
	  \Delta \phi_i=\mu_i-\mu_0.
	\]
	By Perron's method, we can solve
	\begin{equation*}
	\begin{cases}
	\Delta \psi_i=0 \text{ in } D,\\
	\psi_i=-\phi_i \text{ on } \partial D,
	\end{cases}
	\end{equation*}
	and $u_i$ can be written as $u_i=\phi_i+\psi_i$. Now, it is only necessary to verify that $\phi_i$ is bounded in $D$. 
	\[ 
	 \phi_i=\Phi^{\mu_i}-\Phi^{\mu_0}\leq \Phi^{\mu_0}<+\infty \text{ in } D \text{ therefore, } u_i\rightarrow u \text{ for some } u.
	\]
     This implies that $\Delta u_i\rightarrow \Delta u$ thus, $\mu_i$ converges. Note that for each $x\in D$ there exists $x_{k_1}\in \{x_i\}$ such that, $x=x_{k_1}$ and $\mu_{k_1}(x)=0$. By definition from $D$
	\[
	 D=\bigcup \limits_{i> k_1}\{x_i\},
	\] 
	therefore, there exists $x_{k_2}$ such that $x=x_{k_2}$ and $\mu_{k_2}(x)=0$ for $k_1>k_2$. Continuining in the same way we build a subsequence of $\mu_i$ such that,
	\[
	  \mu_{i_k}(x)\rightarrow 0.
	\]
	therefore, $\mu_i(x)\rightarrow 0$ and $u$ satisfies 
    
	\begin{equation*}
	\begin{cases}
	-\Delta u=\mu_0 \text{ in } D\\
	u=0 \text{ on } \partial D
	\end{cases}
	\end{equation*}
\end{proof}

\begin{remark}
   The previous theorem shows us the process through which we can construct a solution to the Poisson problem $-\Delta u = \mu_0$ in $D$ with zero boundary data. This process, known as **balayage** ("to sweep"), consists of, given a certain labeling of the vertices, $\{x_1, x_2, ..., x_m\}$, and a measure $\mu_0$ defined at each $x_n$, using the recursive measure $\mu_i(x_n) = \mu_{i-1} + \mu_{i-1}(x_n)\Delta \delta_{x_i}$ to traverse each vertex $x_i$ and distribute the "mass" associated with $\mu_0(x_n)$ equally among the neighboring vertices.

    \begin{center}

\tikzset{every picture/.style={line width=0.75pt}} 

\begin{tikzpicture}[x=0.75pt,y=0.75pt,yscale=-1,xscale=1]

\draw    (254,22.5) -- (587,127.5) ;
\draw    (198,59.5) -- (561,173.5) ;
\draw    (158,97.5) -- (505,213.5) ;
\draw    (101,127.5) -- (459,251.5) ;
\draw    (140,185.5) -- (329,12.5) ;
\draw    (199,230.5) -- (390,32.5) ;
\draw    (291.17,249.33) -- (459,46) ;
\draw    (360,287.5) -- (516,81.5) ;
\draw  [fill={rgb, 255:red, 208; green, 2; blue, 27 }  ,fill opacity=1 ] (297.67,37.17) .. controls (297.67,35.23) and (299.23,33.67) .. (301.17,33.67) .. controls (303.1,33.67) and (304.67,35.23) .. (304.67,37.17) .. controls (304.67,39.1) and (303.1,40.67) .. (301.17,40.67) .. controls (299.23,40.67) and (297.67,39.1) .. (297.67,37.17) -- cycle ;
\draw  [fill={rgb, 255:red, 126; green, 211; blue, 33 }  ,fill opacity=1 ] (512.5,81.5) .. controls (512.5,79.57) and (514.07,78) .. (516,78) .. controls (517.93,78) and (519.5,79.57) .. (519.5,81.5) .. controls (519.5,83.43) and (517.93,85) .. (516,85) .. controls (514.07,85) and (512.5,83.43) .. (512.5,81.5) -- cycle ;
\draw  [fill={rgb, 255:red, 208; green, 2; blue, 27 }  ,fill opacity=1 ] (211,117.17) .. controls (211,115.23) and (212.57,113.67) .. (214.5,113.67) .. controls (216.43,113.67) and (218,115.23) .. (218,117.17) .. controls (218,119.1) and (216.43,120.67) .. (214.5,120.67) .. controls (212.57,120.67) and (211,119.1) .. (211,117.17) -- cycle ;
\draw  [fill={rgb, 255:red, 208; green, 2; blue, 27 }  ,fill opacity=1 ] (253.67,77.83) .. controls (253.67,75.9) and (255.23,74.33) .. (257.17,74.33) .. controls (259.1,74.33) and (260.67,75.9) .. (260.67,77.83) .. controls (260.67,79.77) and (259.1,81.33) .. (257.17,81.33) .. controls (255.23,81.33) and (253.67,79.77) .. (253.67,77.83) -- cycle ;
\draw  [fill={rgb, 255:red, 208; green, 2; blue, 27 }  ,fill opacity=1 ] (321,100.5) .. controls (321,98.57) and (322.57,97) .. (324.5,97) .. controls (326.43,97) and (328,98.57) .. (328,100.5) .. controls (328,102.43) and (326.43,104) .. (324.5,104) .. controls (322.57,104) and (321,102.43) .. (321,100.5) -- cycle ;
\draw  [fill={rgb, 255:red, 208; green, 2; blue, 27 }  ,fill opacity=1 ] (281.67,141.17) .. controls (281.67,139.23) and (283.23,137.67) .. (285.17,137.67) .. controls (287.1,137.67) and (288.67,139.23) .. (288.67,141.17) .. controls (288.67,143.1) and (287.1,144.67) .. (285.17,144.67) .. controls (283.23,144.67) and (281.67,143.1) .. (281.67,141.17) -- cycle ;
\draw  [fill={rgb, 255:red, 208; green, 2; blue, 27 }  ,fill opacity=1 ] (245.67,177.83) .. controls (245.67,175.9) and (247.23,174.33) .. (249.17,174.33) .. controls (251.1,174.33) and (252.67,175.9) .. (252.67,177.83) .. controls (252.67,179.77) and (251.1,181.33) .. (249.17,181.33) .. controls (247.23,181.33) and (245.67,179.77) .. (245.67,177.83) -- cycle ;
\draw  [fill={rgb, 255:red, 208; green, 2; blue, 27 }  ,fill opacity=1 ] (172.33,151.83) .. controls (172.33,149.9) and (173.9,148.33) .. (175.83,148.33) .. controls (177.77,148.33) and (179.33,149.9) .. (179.33,151.83) .. controls (179.33,153.77) and (177.77,155.33) .. (175.83,155.33) .. controls (173.9,155.33) and (172.33,153.77) .. (172.33,151.83) -- cycle ;
\draw  [fill={rgb, 255:red, 208; green, 2; blue, 27 }  ,fill opacity=1 ] (463.67,144.5) .. controls (463.67,142.57) and (465.23,141) .. (467.17,141) .. controls (469.1,141) and (470.67,142.57) .. (470.67,144.5) .. controls (470.67,146.43) and (469.1,148) .. (467.17,148) .. controls (465.23,148) and (463.67,146.43) .. (463.67,144.5) -- cycle ;
\draw  [fill={rgb, 255:red, 208; green, 2; blue, 27 }  ,fill opacity=1 ] (430.33,189.83) .. controls (430.33,187.9) and (431.9,186.33) .. (433.83,186.33) .. controls (435.77,186.33) and (437.33,187.9) .. (437.33,189.83) .. controls (437.33,191.77) and (435.77,193.33) .. (433.83,193.33) .. controls (431.9,193.33) and (430.33,191.77) .. (430.33,189.83) -- cycle ;
\draw  [fill={rgb, 255:red, 208; green, 2; blue, 27 }  ,fill opacity=1 ] (397,231.83) .. controls (397,229.9) and (398.57,228.33) .. (400.5,228.33) .. controls (402.43,228.33) and (404,229.9) .. (404,231.83) .. controls (404,233.77) and (402.43,235.33) .. (400.5,235.33) .. controls (398.57,235.33) and (397,233.77) .. (397,231.83) -- cycle ;
\draw  [fill={rgb, 255:red, 208; green, 2; blue, 27 }  ,fill opacity=1 ] (429,79.17) .. controls (429,77.23) and (430.57,75.67) .. (432.5,75.67) .. controls (434.43,75.67) and (436,77.23) .. (436,79.17) .. controls (436,81.1) and (434.43,82.67) .. (432.5,82.67) .. controls (430.57,82.67) and (429,81.1) .. (429,79.17) -- cycle ;
\draw  [fill={rgb, 255:red, 208; green, 2; blue, 27 }  ,fill opacity=1 ] (392.33,122.5) .. controls (392.33,120.57) and (393.9,119) .. (395.83,119) .. controls (397.77,119) and (399.33,120.57) .. (399.33,122.5) .. controls (399.33,124.43) and (397.77,126) .. (395.83,126) .. controls (393.9,126) and (392.33,124.43) .. (392.33,122.5) -- cycle ;
\draw  [fill={rgb, 255:red, 208; green, 2; blue, 27 }  ,fill opacity=1 ] (356.33,163.83) .. controls (356.33,161.9) and (357.9,160.33) .. (359.83,160.33) .. controls (361.77,160.33) and (363.33,161.9) .. (363.33,163.83) .. controls (363.33,165.77) and (361.77,167.33) .. (359.83,167.33) .. controls (357.9,167.33) and (356.33,165.77) .. (356.33,163.83) -- cycle ;
\draw  [fill={rgb, 255:red, 208; green, 2; blue, 27 }  ,fill opacity=1 ] (321.67,206.5) .. controls (321.67,204.57) and (323.23,203) .. (325.17,203) .. controls (327.1,203) and (328.67,204.57) .. (328.67,206.5) .. controls (328.67,208.43) and (327.1,210) .. (325.17,210) .. controls (323.23,210) and (321.67,208.43) .. (321.67,206.5) -- cycle ;
\draw  [fill={rgb, 255:red, 208; green, 2; blue, 27 }  ,fill opacity=1 ] (361.67,58.5) .. controls (361.67,56.57) and (363.23,55) .. (365.17,55) .. controls (367.1,55) and (368.67,56.57) .. (368.67,58.5) .. controls (368.67,60.43) and (367.1,62) .. (365.17,62) .. controls (363.23,62) and (361.67,60.43) .. (361.67,58.5) -- cycle ;
\draw  [fill={rgb, 255:red, 208; green, 2; blue, 27 }  ,fill opacity=1 ] (498.33,99.83) .. controls (498.33,97.9) and (499.9,96.33) .. (501.83,96.33) .. controls (503.77,96.33) and (505.33,97.9) .. (505.33,99.83) .. controls (505.33,101.77) and (503.77,103.33) .. (501.83,103.33) .. controls (499.9,103.33) and (498.33,101.77) .. (498.33,99.83) -- cycle ;
\draw  [fill={rgb, 255:red, 126; green, 211; blue, 33 }  ,fill opacity=1 ] (446,56.75) .. controls (446,54.4) and (447.9,52.5) .. (450.25,52.5) .. controls (452.6,52.5) and (454.5,54.4) .. (454.5,56.75) .. controls (454.5,59.1) and (452.6,61) .. (450.25,61) .. controls (447.9,61) and (446,59.1) .. (446,56.75) -- cycle ;
\draw  [fill={rgb, 255:red, 126; green, 211; blue, 33 }  ,fill opacity=1 ] (386.5,32.5) .. controls (386.5,30.57) and (388.07,29) .. (390,29) .. controls (391.93,29) and (393.5,30.57) .. (393.5,32.5) .. controls (393.5,34.43) and (391.93,36) .. (390,36) .. controls (388.07,36) and (386.5,34.43) .. (386.5,32.5) -- cycle ;
\draw  [fill={rgb, 255:red, 126; green, 211; blue, 33 }  ,fill opacity=1 ] (321,15.83) .. controls (321,13.9) and (322.57,12.33) .. (324.5,12.33) .. controls (326.43,12.33) and (328,13.9) .. (328,15.83) .. controls (328,17.77) and (326.43,19.33) .. (324.5,19.33) .. controls (322.57,19.33) and (321,17.77) .. (321,15.83) -- cycle ;
\draw  [fill={rgb, 255:red, 126; green, 211; blue, 33 }  ,fill opacity=1 ] (262.33,25.83) .. controls (262.33,23.9) and (263.9,22.33) .. (265.83,22.33) .. controls (267.77,22.33) and (269.33,23.9) .. (269.33,25.83) .. controls (269.33,27.77) and (267.77,29.33) .. (265.83,29.33) .. controls (263.9,29.33) and (262.33,27.77) .. (262.33,25.83) -- cycle ;
\draw  [fill={rgb, 255:red, 126; green, 211; blue, 33 }  ,fill opacity=1 ] (211,64.5) .. controls (211,62.57) and (212.57,61) .. (214.5,61) .. controls (216.43,61) and (218,62.57) .. (218,64.5) .. controls (218,66.43) and (216.43,68) .. (214.5,68) .. controls (212.57,68) and (211,66.43) .. (211,64.5) -- cycle ;
\draw  [fill={rgb, 255:red, 126; green, 211; blue, 33 }  ,fill opacity=1 ] (171,103.17) .. controls (171,101.23) and (172.57,99.67) .. (174.5,99.67) .. controls (176.43,99.67) and (178,101.23) .. (178,103.17) .. controls (178,105.1) and (176.43,106.67) .. (174.5,106.67) .. controls (172.57,106.67) and (171,105.1) .. (171,103.17) -- cycle ;
\draw  [fill={rgb, 255:red, 126; green, 211; blue, 33 }  ,fill opacity=1 ] (127.67,137.83) .. controls (127.67,135.9) and (129.23,134.33) .. (131.17,134.33) .. controls (133.1,134.33) and (134.67,135.9) .. (134.67,137.83) .. controls (134.67,139.77) and (133.1,141.33) .. (131.17,141.33) .. controls (129.23,141.33) and (127.67,139.77) .. (127.67,137.83) -- cycle ;
\draw  [fill={rgb, 255:red, 126; green, 211; blue, 33 }  ,fill opacity=1 ] (551,117.83) .. controls (551,115.9) and (552.57,114.33) .. (554.5,114.33) .. controls (556.43,114.33) and (558,115.9) .. (558,117.83) .. controls (558,119.77) and (556.43,121.33) .. (554.5,121.33) .. controls (552.57,121.33) and (551,119.77) .. (551,117.83) -- cycle ;
\draw  [fill={rgb, 255:red, 126; green, 211; blue, 33 }  ,fill opacity=1 ] (520.33,163.17) .. controls (520.33,161.23) and (521.9,159.67) .. (523.83,159.67) .. controls (525.77,159.67) and (527.33,161.23) .. (527.33,163.17) .. controls (527.33,165.1) and (525.77,166.67) .. (523.83,166.67) .. controls (521.9,166.67) and (520.33,165.1) .. (520.33,163.17) -- cycle ;
\draw  [fill={rgb, 255:red, 126; green, 211; blue, 33 }  ,fill opacity=1 ] (491.33,210.17) .. controls (491.33,208.23) and (492.9,206.67) .. (494.83,206.67) .. controls (496.77,206.67) and (498.33,208.23) .. (498.33,210.17) .. controls (498.33,212.1) and (496.77,213.67) .. (494.83,213.67) .. controls (492.9,213.67) and (491.33,212.1) .. (491.33,210.17) -- cycle ;
\draw  [fill={rgb, 255:red, 126; green, 211; blue, 33 }  ,fill opacity=1 ] (459,251.5) .. controls (459,249.57) and (460.57,248) .. (462.5,248) .. controls (464.43,248) and (466,249.57) .. (466,251.5) .. controls (466,253.43) and (464.43,255) .. (462.5,255) .. controls (460.57,255) and (459,253.43) .. (459,251.5) -- cycle ;
\draw  [fill={rgb, 255:red, 126; green, 211; blue, 33 }  ,fill opacity=1 ] (369.17,269.33) .. controls (369.17,267.03) and (371.03,265.17) .. (373.33,265.17) .. controls (375.63,265.17) and (377.5,267.03) .. (377.5,269.33) .. controls (377.5,271.63) and (375.63,273.5) .. (373.33,273.5) .. controls (371.03,273.5) and (369.17,271.63) .. (369.17,269.33) -- cycle ;
\draw  [fill={rgb, 255:red, 126; green, 211; blue, 33 }  ,fill opacity=1 ] (299,247.75) .. controls (299,245.59) and (297.25,243.83) .. (295.08,243.83) .. controls (292.92,243.83) and (291.17,245.59) .. (291.17,247.75) .. controls (291.17,249.91) and (292.92,251.67) .. (295.08,251.67) .. controls (297.25,251.67) and (299,249.91) .. (299,247.75) -- cycle ;
\draw  [fill={rgb, 255:red, 126; green, 211; blue, 33 }  ,fill opacity=1 ] (206.17,218.5) .. controls (206.17,216.57) and (207.73,215) .. (209.67,215) .. controls (211.6,215) and (213.17,216.57) .. (213.17,218.5) .. controls (213.17,220.43) and (211.6,222) .. (209.67,222) .. controls (207.73,222) and (206.17,220.43) .. (206.17,218.5) -- cycle ;
\draw  [fill={rgb, 255:red, 126; green, 211; blue, 33 }  ,fill opacity=1 ] (136.5,185.5) .. controls (136.5,183.57) and (138.07,182) .. (140,182) .. controls (141.93,182) and (143.5,183.57) .. (143.5,185.5) .. controls (143.5,187.43) and (141.93,189) .. (140,189) .. controls (138.07,189) and (136.5,187.43) .. (136.5,185.5) -- cycle ;
\draw  [fill={rgb, 255:red, 208; green, 2; blue, 27 }  ,fill opacity=1 ] (211,117.17) .. controls (211,115.23) and (212.57,113.67) .. (214.5,113.67) .. controls (216.43,113.67) and (218,115.23) .. (218,117.17) .. controls (218,119.1) and (216.43,120.67) .. (214.5,120.67) .. controls (212.57,120.67) and (211,119.1) .. (211,117.17) -- cycle ;

\draw (271.6,114.2) node [anchor=north west][inner sep=0.75pt]    {$x_{n}$};

\end{tikzpicture}

\end{center}

Taking into account that in the $n$-th sweep we have $\mu_n(x_n) = 0$, each vertex adjacent to $x_n$ will receive $\frac{1}{\#\partial \{ x_n \}} \mu_{n-1}(x_n)$ from the swept vertex. Thus, at step $n$, the measure at $x_n$ will be zero, and it will be redistributed equally among its adjacent vertices. The idea is to continue this process until the entire set $D$ has been swept and the total mass $\mu_0(D)$ has been accumulated on the boundary $\partial D$.

\begin{center}

\tikzset{every picture/.style={line width=0.75pt}} 

\begin{tikzpicture}[x=0.75pt,y=0.75pt,yscale=-1,xscale=1]

\draw    (254,22.5) -- (587,127.5) ;
\draw    (198,59.5) -- (561,173.5) ;
\draw    (120.44,84.83) -- (467.44,200.83) ;
\draw    (101,127.5) -- (459,251.5) ;
\draw    (140,185.5) -- (329,12.5) ;
\draw    (199,230.5) -- (390,32.5) ;
\draw    (291.17,249.33) -- (459,46) ;
\draw    (360,287.5) -- (516,81.5) ;
\draw  [fill={rgb, 255:red, 208; green, 2; blue, 27 }  ,fill opacity=1 ] (297.67,37.17) .. controls (297.67,35.23) and (299.23,33.67) .. (301.17,33.67) .. controls (303.1,33.67) and (304.67,35.23) .. (304.67,37.17) .. controls (304.67,39.1) and (303.1,40.67) .. (301.17,40.67) .. controls (299.23,40.67) and (297.67,39.1) .. (297.67,37.17) -- cycle ;
\draw  [fill={rgb, 255:red, 126; green, 211; blue, 33 }  ,fill opacity=1 ] (512.5,81.5) .. controls (512.5,79.57) and (514.07,78) .. (516,78) .. controls (517.93,78) and (519.5,79.57) .. (519.5,81.5) .. controls (519.5,83.43) and (517.93,85) .. (516,85) .. controls (514.07,85) and (512.5,83.43) .. (512.5,81.5) -- cycle ;
\draw  [fill={rgb, 255:red, 208; green, 2; blue, 27 }  ,fill opacity=1 ] (211,117.17) .. controls (211,115.23) and (212.57,113.67) .. (214.5,113.67) .. controls (216.43,113.67) and (218,115.23) .. (218,117.17) .. controls (218,119.1) and (216.43,120.67) .. (214.5,120.67) .. controls (212.57,120.67) and (211,119.1) .. (211,117.17) -- cycle ;
\draw  [fill={rgb, 255:red, 208; green, 2; blue, 27 }  ,fill opacity=1 ] (253.67,77.83) .. controls (253.67,75.9) and (255.23,74.33) .. (257.17,74.33) .. controls (259.1,74.33) and (260.67,75.9) .. (260.67,77.83) .. controls (260.67,79.77) and (259.1,81.33) .. (257.17,81.33) .. controls (255.23,81.33) and (253.67,79.77) .. (253.67,77.83) -- cycle ;
\draw  [fill={rgb, 255:red, 208; green, 2; blue, 27 }  ,fill opacity=1 ] (319.89,100.22) .. controls (319.89,96.97) and (322.53,94.33) .. (325.78,94.33) .. controls (329.03,94.33) and (331.67,96.97) .. (331.67,100.22) .. controls (331.67,103.47) and (329.03,106.11) .. (325.78,106.11) .. controls (322.53,106.11) and (319.89,103.47) .. (319.89,100.22) -- cycle ;
\draw  [fill={rgb, 255:red, 255; green, 255; blue, 255 }  ,fill opacity=1 ] (281.67,141.17) .. controls (281.67,139.23) and (283.23,137.67) .. (285.17,137.67) .. controls (287.1,137.67) and (288.67,139.23) .. (288.67,141.17) .. controls (288.67,143.1) and (287.1,144.67) .. (285.17,144.67) .. controls (283.23,144.67) and (281.67,143.1) .. (281.67,141.17) -- cycle ;
\draw  [fill={rgb, 255:red, 208; green, 2; blue, 27 }  ,fill opacity=1 ] (243.44,180.11) .. controls (243.44,176.31) and (246.53,173.22) .. (250.33,173.22) .. controls (254.14,173.22) and (257.22,176.31) .. (257.22,180.11) .. controls (257.22,183.92) and (254.14,187) .. (250.33,187) .. controls (246.53,187) and (243.44,183.92) .. (243.44,180.11) -- cycle ;
\draw  [fill={rgb, 255:red, 208; green, 2; blue, 27 }  ,fill opacity=1 ] (172.33,151.83) .. controls (172.33,149.9) and (173.9,148.33) .. (175.83,148.33) .. controls (177.77,148.33) and (179.33,149.9) .. (179.33,151.83) .. controls (179.33,153.77) and (177.77,155.33) .. (175.83,155.33) .. controls (173.9,155.33) and (172.33,153.77) .. (172.33,151.83) -- cycle ;
\draw  [fill={rgb, 255:red, 208; green, 2; blue, 27 }  ,fill opacity=1 ] (463.67,144.5) .. controls (463.67,142.57) and (465.23,141) .. (467.17,141) .. controls (469.1,141) and (470.67,142.57) .. (470.67,144.5) .. controls (470.67,146.43) and (469.1,148) .. (467.17,148) .. controls (465.23,148) and (463.67,146.43) .. (463.67,144.5) -- cycle ;
\draw  [fill={rgb, 255:red, 208; green, 2; blue, 27 }  ,fill opacity=1 ] (430.33,189.83) .. controls (430.33,187.9) and (431.9,186.33) .. (433.83,186.33) .. controls (435.77,186.33) and (437.33,187.9) .. (437.33,189.83) .. controls (437.33,191.77) and (435.77,193.33) .. (433.83,193.33) .. controls (431.9,193.33) and (430.33,191.77) .. (430.33,189.83) -- cycle ;
\draw  [fill={rgb, 255:red, 208; green, 2; blue, 27 }  ,fill opacity=1 ] (397,231.83) .. controls (397,229.9) and (398.57,228.33) .. (400.5,228.33) .. controls (402.43,228.33) and (404,229.9) .. (404,231.83) .. controls (404,233.77) and (402.43,235.33) .. (400.5,235.33) .. controls (398.57,235.33) and (397,233.77) .. (397,231.83) -- cycle ;
\draw  [fill={rgb, 255:red, 208; green, 2; blue, 27 }  ,fill opacity=1 ] (429,79.17) .. controls (429,77.23) and (430.57,75.67) .. (432.5,75.67) .. controls (434.43,75.67) and (436,77.23) .. (436,79.17) .. controls (436,81.1) and (434.43,82.67) .. (432.5,82.67) .. controls (430.57,82.67) and (429,81.1) .. (429,79.17) -- cycle ;
\draw  [fill={rgb, 255:red, 208; green, 2; blue, 27 }  ,fill opacity=1 ] (392.33,122.5) .. controls (392.33,120.57) and (393.9,119) .. (395.83,119) .. controls (397.77,119) and (399.33,120.57) .. (399.33,122.5) .. controls (399.33,124.43) and (397.77,126) .. (395.83,126) .. controls (393.9,126) and (392.33,124.43) .. (392.33,122.5) -- cycle ;
\draw  [fill={rgb, 255:red, 208; green, 2; blue, 27 }  ,fill opacity=1 ] (354.78,165) .. controls (354.78,161.56) and (357.56,158.78) .. (361,158.78) .. controls (364.44,158.78) and (367.22,161.56) .. (367.22,165) .. controls (367.22,168.44) and (364.44,171.22) .. (361,171.22) .. controls (357.56,171.22) and (354.78,168.44) .. (354.78,165) -- cycle ;
\draw  [fill={rgb, 255:red, 208; green, 2; blue, 27 }  ,fill opacity=1 ] (321.67,206.5) .. controls (321.67,204.57) and (323.23,203) .. (325.17,203) .. controls (327.1,203) and (328.67,204.57) .. (328.67,206.5) .. controls (328.67,208.43) and (327.1,210) .. (325.17,210) .. controls (323.23,210) and (321.67,208.43) .. (321.67,206.5) -- cycle ;
\draw  [fill={rgb, 255:red, 208; green, 2; blue, 27 }  ,fill opacity=1 ] (361.67,58.5) .. controls (361.67,56.57) and (363.23,55) .. (365.17,55) .. controls (367.1,55) and (368.67,56.57) .. (368.67,58.5) .. controls (368.67,60.43) and (367.1,62) .. (365.17,62) .. controls (363.23,62) and (361.67,60.43) .. (361.67,58.5) -- cycle ;
\draw  [fill={rgb, 255:red, 208; green, 2; blue, 27 }  ,fill opacity=1 ] (498.33,99.83) .. controls (498.33,97.9) and (499.9,96.33) .. (501.83,96.33) .. controls (503.77,96.33) and (505.33,97.9) .. (505.33,99.83) .. controls (505.33,101.77) and (503.77,103.33) .. (501.83,103.33) .. controls (499.9,103.33) and (498.33,101.77) .. (498.33,99.83) -- cycle ;
\draw  [fill={rgb, 255:red, 126; green, 211; blue, 33 }  ,fill opacity=1 ] (446,56.75) .. controls (446,54.4) and (447.9,52.5) .. (450.25,52.5) .. controls (452.6,52.5) and (454.5,54.4) .. (454.5,56.75) .. controls (454.5,59.1) and (452.6,61) .. (450.25,61) .. controls (447.9,61) and (446,59.1) .. (446,56.75) -- cycle ;
\draw  [fill={rgb, 255:red, 126; green, 211; blue, 33 }  ,fill opacity=1 ] (386.5,32.5) .. controls (386.5,30.57) and (388.07,29) .. (390,29) .. controls (391.93,29) and (393.5,30.57) .. (393.5,32.5) .. controls (393.5,34.43) and (391.93,36) .. (390,36) .. controls (388.07,36) and (386.5,34.43) .. (386.5,32.5) -- cycle ;
\draw  [fill={rgb, 255:red, 126; green, 211; blue, 33 }  ,fill opacity=1 ] (321,15.83) .. controls (321,13.9) and (322.57,12.33) .. (324.5,12.33) .. controls (326.43,12.33) and (328,13.9) .. (328,15.83) .. controls (328,17.77) and (326.43,19.33) .. (324.5,19.33) .. controls (322.57,19.33) and (321,17.77) .. (321,15.83) -- cycle ;
\draw  [fill={rgb, 255:red, 126; green, 211; blue, 33 }  ,fill opacity=1 ] (262.33,25.83) .. controls (262.33,23.9) and (263.9,22.33) .. (265.83,22.33) .. controls (267.77,22.33) and (269.33,23.9) .. (269.33,25.83) .. controls (269.33,27.77) and (267.77,29.33) .. (265.83,29.33) .. controls (263.9,29.33) and (262.33,27.77) .. (262.33,25.83) -- cycle ;
\draw  [fill={rgb, 255:red, 126; green, 211; blue, 33 }  ,fill opacity=1 ] (211,64.5) .. controls (211,62.57) and (212.57,61) .. (214.5,61) .. controls (216.43,61) and (218,62.57) .. (218,64.5) .. controls (218,66.43) and (216.43,68) .. (214.5,68) .. controls (212.57,68) and (211,66.43) .. (211,64.5) -- cycle ;
\draw  [fill={rgb, 255:red, 126; green, 211; blue, 33 }  ,fill opacity=1 ] (171,103.17) .. controls (171,101.23) and (172.57,99.67) .. (174.5,99.67) .. controls (176.43,99.67) and (178,101.23) .. (178,103.17) .. controls (178,105.1) and (176.43,106.67) .. (174.5,106.67) .. controls (172.57,106.67) and (171,105.1) .. (171,103.17) -- cycle ;
\draw  [fill={rgb, 255:red, 126; green, 211; blue, 33 }  ,fill opacity=1 ] (127.67,137.83) .. controls (127.67,135.9) and (129.23,134.33) .. (131.17,134.33) .. controls (133.1,134.33) and (134.67,135.9) .. (134.67,137.83) .. controls (134.67,139.77) and (133.1,141.33) .. (131.17,141.33) .. controls (129.23,141.33) and (127.67,139.77) .. (127.67,137.83) -- cycle ;
\draw  [fill={rgb, 255:red, 126; green, 211; blue, 33 }  ,fill opacity=1 ] (551,117.83) .. controls (551,115.9) and (552.57,114.33) .. (554.5,114.33) .. controls (556.43,114.33) and (558,115.9) .. (558,117.83) .. controls (558,119.77) and (556.43,121.33) .. (554.5,121.33) .. controls (552.57,121.33) and (551,119.77) .. (551,117.83) -- cycle ;
\draw  [fill={rgb, 255:red, 126; green, 211; blue, 33 }  ,fill opacity=1 ] (520.33,163.17) .. controls (520.33,161.23) and (521.9,159.67) .. (523.83,159.67) .. controls (525.77,159.67) and (527.33,161.23) .. (527.33,163.17) .. controls (527.33,165.1) and (525.77,166.67) .. (523.83,166.67) .. controls (521.9,166.67) and (520.33,165.1) .. (520.33,163.17) -- cycle ;
\draw  [fill={rgb, 255:red, 126; green, 211; blue, 33 }  ,fill opacity=1 ] (491.33,210.17) .. controls (491.33,208.23) and (492.9,206.67) .. (494.83,206.67) .. controls (496.77,206.67) and (498.33,208.23) .. (498.33,210.17) .. controls (498.33,212.1) and (496.77,213.67) .. (494.83,213.67) .. controls (492.9,213.67) and (491.33,212.1) .. (491.33,210.17) -- cycle ;
\draw  [fill={rgb, 255:red, 126; green, 211; blue, 33 }  ,fill opacity=1 ] (459,251.5) .. controls (459,249.57) and (460.57,248) .. (462.5,248) .. controls (464.43,248) and (466,249.57) .. (466,251.5) .. controls (466,253.43) and (464.43,255) .. (462.5,255) .. controls (460.57,255) and (459,253.43) .. (459,251.5) -- cycle ;
\draw  [fill={rgb, 255:red, 126; green, 211; blue, 33 }  ,fill opacity=1 ] (369.17,269.33) .. controls (369.17,267.03) and (371.03,265.17) .. (373.33,265.17) .. controls (375.63,265.17) and (377.5,267.03) .. (377.5,269.33) .. controls (377.5,271.63) and (375.63,273.5) .. (373.33,273.5) .. controls (371.03,273.5) and (369.17,271.63) .. (369.17,269.33) -- cycle ;
\draw  [fill={rgb, 255:red, 126; green, 211; blue, 33 }  ,fill opacity=1 ] (299,247.75) .. controls (299,245.59) and (297.25,243.83) .. (295.08,243.83) .. controls (292.92,243.83) and (291.17,245.59) .. (291.17,247.75) .. controls (291.17,249.91) and (292.92,251.67) .. (295.08,251.67) .. controls (297.25,251.67) and (299,249.91) .. (299,247.75) -- cycle ;
\draw  [fill={rgb, 255:red, 126; green, 211; blue, 33 }  ,fill opacity=1 ] (206.17,218.5) .. controls (206.17,216.57) and (207.73,215) .. (209.67,215) .. controls (211.6,215) and (213.17,216.57) .. (213.17,218.5) .. controls (213.17,220.43) and (211.6,222) .. (209.67,222) .. controls (207.73,222) and (206.17,220.43) .. (206.17,218.5) -- cycle ;
\draw  [fill={rgb, 255:red, 126; green, 211; blue, 33 }  ,fill opacity=1 ] (136.5,185.5) .. controls (136.5,183.57) and (138.07,182) .. (140,182) .. controls (141.93,182) and (143.5,183.57) .. (143.5,185.5) .. controls (143.5,187.43) and (141.93,189) .. (140,189) .. controls (138.07,189) and (136.5,187.43) .. (136.5,185.5) -- cycle ;
\draw  [fill={rgb, 255:red, 208; green, 2; blue, 27 }  ,fill opacity=1 ] (207.68,117.17) .. controls (207.68,113.4) and (210.73,110.35) .. (214.5,110.35) .. controls (218.27,110.35) and (221.32,113.4) .. (221.32,117.17) .. controls (221.32,120.93) and (218.27,123.99) .. (214.5,123.99) .. controls (210.73,123.99) and (207.68,120.93) .. (207.68,117.17) -- cycle ;
\draw    (287.44,138.33) -- (315.15,110.2) ;
\draw [shift={(316.56,108.78)}, rotate = 134.57] [color={rgb, 255:red, 0; green, 0; blue, 0 }  ][line width=0.75]    (10.93,-3.29) .. controls (6.95,-1.4) and (3.31,-0.3) .. (0,0) .. controls (3.31,0.3) and (6.95,1.4) .. (10.93,3.29)   ;
\draw    (288.67,141.17) -- (340.44,158.37) ;
\draw [shift={(342.33,159)}, rotate = 198.38] [color={rgb, 255:red, 0; green, 0; blue, 0 }  ][line width=0.75]    (10.93,-3.29) .. controls (6.95,-1.4) and (3.31,-0.3) .. (0,0) .. controls (3.31,0.3) and (6.95,1.4) .. (10.93,3.29)   ;
\draw    (282.56,139.67) -- (245.77,126.13) ;
\draw [shift={(243.89,125.44)}, rotate = 20.19] [color={rgb, 255:red, 0; green, 0; blue, 0 }  ][line width=0.75]    (10.93,-3.29) .. controls (6.95,-1.4) and (3.31,-0.3) .. (0,0) .. controls (3.31,0.3) and (6.95,1.4) .. (10.93,3.29)   ;
\draw    (282.56,143.67) -- (265.06,161.78) ;
\draw [shift={(263.67,163.22)}, rotate = 314.01] [color={rgb, 255:red, 0; green, 0; blue, 0 }  ][line width=0.75]    (10.93,-3.29) .. controls (6.95,-1.4) and (3.31,-0.3) .. (0,0) .. controls (3.31,0.3) and (6.95,1.4) .. (10.93,3.29)   ;

\draw (271.6,114.2) node [anchor=north west][inner sep=0.75pt]    {$x_{n}$};

\end{tikzpicture}

\end{center}

Finally, $u_n$ measures the amount of mass that has exited the set $D$ in the $n$-th iteration, which is given by $-\Delta u_i = \mu_0 - \mu_i$, where $(\mu_0 - \mu_i)(x)$ indicates the amount of mass that has been removed from $D$ in the $n$-th iteration. Therefore, since $u_i \to u$, with $-\Delta u(x) = 0$ for all $x \in D$, it follows that the set $D$ has been completely swept.

\end{remark}

\noindent \textbf{Acknowledgments:}\\
The author acknowledges financial support from the Convocatoria Interna UTP 2024, CIE 3-24-3.

\newpage

\section{Bibliography}

\end{document}